\title[Degenerate fast-decay mobility ]{Nonlinear diffusion equations with degenerate\\
 fast-decay mobility by coordinate transformation
}
 \author[N. Ansini and S. Fagioli]{}
 \email{ansini@mat.uniroma1.it}
\email{simone.fagioli@univaq.it}
\newtheorem{defin}{Definition}[section]
\newtheorem{thm}{Theorem}[section]
\newtheorem{prop}{Proposition}[section]
\newtheorem{lem}{Lemma}[section]
\newtheorem{rem}{Remark}[section]
\newcommand{\R}{\mathbb{R}}
\newcommand{\Rd}{{\mathbb{R}^{d}}}
\newcommand{\dive}{\mathrm{div}}
\newcommand{\mP}{{\mathcal{P}}}
\newcommand{\mF}{{\mathcal{F}}}
\newcommand{\setR}{\mathbb{R}}
\newcommand{\dd}{\,\mathrm{d}}
\newcommand{\ep}{\epsilon}
\newcommand{\dx}{\,dx}
\newcommand{\dy}{\,dy}
\newcommand{\dz}{\,dz}
\newcommand{\ie}{; {\it i.e.}, }
\begin{document}

\keywords{}

\subjclass[2010]{}

\maketitle

\centerline{\scshape Nadia Ansini}
\medskip
{\footnotesize
        \centerline{Dipartment of Mathematics, Sapienza University of Roma }
    \centerline{ P.le Aldo Moro 2, 00185 Rome,  Italy}
}

\medskip

\centerline{\scshape Simone Fagioli}
\medskip
{\footnotesize
       \centerline{DISIM - Department of Information Engineering, Computer Science and Mathematics, University of L'Aquila}
    \centerline{Via Vetoio 1 (Coppito), 67100 L'Aquila (AQ), Italy}
}

\begin{abstract}
We prove an existence and uniqueness result for solutions to nonlinear diffusion equations with degenerate mobility posed on a bounded interval  for a certain density $u$. In case of \emph{fast-decay} mobilities, namely mobilities functions under a Osgood integrability condition, a suitable coordinate transformation is introduced and a new nonlinear diffusion equation with linear mobility is obtained.
We observe that the coordinate transformation induces a mass-preserving scaling on the density and the nonlinearity, described by the original nonlinear mobility, is included in the diffusive process.
We show that the rescaled density $\rho$ is the unique weak solution to the nonlinear diffusion equation with linear mobility. Moreover, the results obtained for the density $\rho$ allow us to motivate the aforementioned change of variable and to state the results in terms of the original density $u$ without prescribing any boundary conditions.

\end{abstract}

\section{Introduction}
\date{20 gennaio 2019}

Spreading behaviours appear in a large class of phenomena in biology such as animal swarming, chemiotaxis and bacterial movements, but also in modelling pedestrian movements and opinon formation, and it is often in competition with other effects, such as transport driven by external forces (local potentials) and/or aggregation or repulsion induced by the presence of non-local potentials. In order to handle the aforementioned dynamics mathematical models composed by nonlinear aggregation/diffusion/transport equations  were introduced \cite{capasso,okubo,keller_segel,mogilner,OkLe,painter_hillen} and deeply studied in recent years adopting different techniques and investigating possible modeling extensions (see e.g.\cite{bel,blanchet, BDF,CPSW,FR,varadhan2,MatSol} and references therein). The presence of a nonlinear mobility term in the equation may help to improve the ability of the models to catch more sophisticated phenomena. The general form of the equation we are considering is
\begin{equation}\label{eq:general}
\partial_t u = \dive\left(G(x,u)\nabla\left(\Phi(u)+W(x)\right)\right),
\end{equation}
where $u$ is the density population, the function $\Phi$ models the spreading effects and, in general, it is a nonlinear function of the density, $W$ is an external potential.
 Non-linear mobilities functions $G$, depending only on the density $u$ and  degenerating for a certain value $u_{max}>0$, are used to prevent the overcrowding effect, for istance in classical chemotaxis models that may produce blow-up in finite time, see \cite{bel,blanchet,JL,TW}. The presence of such a mobility induce a more $realistic$ behavior in which this phenomenon is prevented: aggregation stops once $u_{max}$ is reached, see \cite{BurDiFDol,bruna_chapman,painter_hillen}.

In this paper we deal with a mobility function of the form, $G(x,u)=g(x)^2 u$, that is linear in $u$ and \emph{non homogeneous} in $x$. Such mobility may model the possible presence of spatial heterogeneity in the domain of $u$. In the sequel we call mobility the function $g(x)$\ie the $x$-dependent part of $G$. We reduce to the one-dimensional initial value problem for nonlinear convection-diffusion equation on bounded intervals  with degenerate mobility 
by considering the following equation
\begin{equation}\label{eq:main_u_1}
  \partial_t u = (g(x)^2  u (\varphi'(u)+W(x))_x)_x\,,
\end{equation}
where $u= u(x,t)$ is defined on the domain $Q_\Omega := \{(x,t)\in \Omega\times [0,+\infty)\}$ with $\Omega=(-1,1)$.
We assume that the mobility function $g:\Omega\rightarrow [0,+\infty)$ (or \emph{inverse metric coefficient}) vanishes at the edges $x=\pm 1$.
We can consider as reference example $g(x)=(1-x^2)^{p/2}$, $p>0$. 
The function $\varphi:[0,+\infty)\rightarrow \R$ represents a \emph{free energy density}, resulting from local repulsive effects or volume filling mechanisms, and $W:\Omega\rightarrow \R$ is the \emph{external potential}.
Since $g$ vanishing at the edges $x=\pm1$, the problem of posing suitable zero-flux boundary conditions arises in order to have a (unique) solution $u$ with constant mass. Roughly speaking,  if we consider  \eqref{eq:main_u_1} as the continuum limit equation of a many particles system and we assume that $g$ vanishes very fast at $x=\pm 1$ then the particles slow down so fast at the boundary that no boundary condition has to be prescribed in order to preserve the total mass of $u$. On the other hand, if $g$ goes to zero very slowly at $x=\pm 1$, a zero-flux boundary condition could tackle the loss of mass.

The formulation of equation \eqref{eq:main_u_1} as gradient flows, in the sense of \cite{AGS}, on a  \emph{modified} Wasserstein space  was first proven in \cite{Lisini} for a class of mobility functions $G:\R^n \to \R^n$ statisfying a uniform ellipticity assumption, 
$$\lambda |\zeta|^2 \leq \langle G(x)\zeta,\zeta\rangle\leq \Lambda |\zeta|^2 $$
 for all $x,\zeta\in\R^n$ and for some $\lambda,\Lambda>0$, inducing a metric coefficient $M=G^{-1}$ that satisfy a similar condition, see also \cite{CGM,CCT,SW}. Unfortunately this result does not apply to our case. 
Therefore, a new mathematical approach is needed in order to prove existence and uniqueness of solutions to  equation \eqref{eq:main_u_1}. Moreover, the mobility degenerating at the boundary models are of high interest  also for applications (see e.g. the modeling of the opinion formation phenomena \cite{Tos} ).

Our approach consists in introducing a suitable coordinate transformation with the aim of getting a Fokker-Planck type equation in a new variable $\rho$ defined on the whole space $\R$ and with homogeneous mobility.  
Indeed, we set $\alpha:\Omega\rightarrow \R$ as
\begin{equation}\label{eq:alpha}
  \alpha(x):=\int_0^x \frac{1}{g(z)}dz.
\end{equation}
By definition of $g$ we have that $\alpha$ is a $C^1(\Omega)$, strictly increasing function. We assume that $g$ satisfies also the Osgood condition
\begin{equation}\label{M1}
\displaystyle{\int_0^1 \frac{1}{g(z)} dz =+\infty}\,,
\end{equation}
that is, the mobility has a \emph{fast-decay} behaviour. The function $\alpha$ is a $1:1$ map from $\Omega$ onto $\R$. Our reference example  $g(x)=(1-x^2)^{p/2}$ is a fast-decay mobility provided $p\geq 2$. Setting the coordinate transformation
$$
y=\alpha(x) \in \R,\quad \forall x\in \Omega,
$$
and the mass preserving  scaling as follows
\begin{equation}\label{eq:scaling}
  u(x,t)=\alpha^\prime(x) \rho(\alpha(x),t)\,,
\end{equation}
we have that, by assumption (\ref{M1}), $\rho$ is defined on \[Q_\R=\{(y,t)\in \R\times [0,+\infty)\}.\]
Replacing the ansatz  \eqref{eq:scaling} into \eqref{eq:main_u_1} we obtain
\begin{equation}\label{eq:main_rho_1}
  \partial_t \rho = (\rho(\varphi'(a(y)\rho) + V)_y)_y\,,
\end{equation}
where
$$
  a(y):= \frac{1}{g(\alpha^{-1}(y))}\,,\qquad V(y):=W(\alpha^{-1}(y))\,.
$$
Therefore, we may conclude that, at least formally,  if $u$ solves \eqref{eq:main_u_1} then $\rho$ solves \eqref{eq:main_rho_1} and vice versa.

There are two main advantages in studying problem \eqref{eq:main_rho_1} in place of \eqref{eq:main_u_1}. First of all, as already observed, the new equation is posed on the whole $\R$, and  no boundary conditions should be prescribed. Moreover, the mobility in the continuity equation is linear and no longer depending on the space variable. 

If $g$ does not satisfy 
(\ref{M1})\ie there exists $l>0$ such that
\begin{equation}\label{M2}
 \displaystyle{\int_0^1 \frac{1}{g(z)} dz =l<+\infty}\,,
\end{equation}
the map $\alpha$ is now a bi-jection from $(-1,1)$ into $(-l,l)$ as e.g. in case of $0\leq p < 2$ for $g(x)=(1-x^2)^{p/2}$. Condition \eqref{M2} corresponds then to \emph{slow-decay} behaviour of the  mobility. We state that the scaling \eqref{eq:scaling} can be still applied, and a new density $\rho(y,t)$ still solves \eqref{eq:main_rho_1}. However, $\rho$ is defined on the bounded spacial domain $(-l,l)$, and a zero-flux boundary condition must be prescribed in order to preserve its total mass.  In a forthcoming paper we explore in details the case of \emph{slow-decay mobility}. 

\bigskip

In the fast-decay mobility case we recognize equation \eqref{eq:main_rho_1} as the formal Wasserstein gradient flow associated to the functional
\begin{equation}\label{functional}
\mF^a[\rho]=\int_{\R}\frac{\varphi(a(y)\rho(y))}{a(y)}dy+\int_\R V(y)\rho(y) dy,
\end{equation}
(see e.g. \cite{AGS}) .
We will recall the basic notions of Wasserstein gradient flow theory in Section \ref{subsec:AGS}.
It is well known by the theory developed in \cite{AGS,otto,S,V2} that \eqref{eq:main_rho_1} has a unique solution in the space of probability measures with finite second moment provided the functional $\mF$ above is \emph{displacement $\lambda$-convex} (in addition to some further technical assumptions)\ie geodesically convex on the Wasserstein space up to a quadratic perturbation. Hence, following the approach as in \cite{DFMatthes}, we will collect conditions on $g$, $\varphi$, and $W$ such that the corresponding functional $\mF$ obtained after the scaling \eqref{eq:scaling} is geodesically $\lambda$-convex. Moreover, we state the existence and uniqueness result for \eqref{eq:main_rho_1} by using minimizing movements method and the by-now classical JKO approach \cite{JKO}, and we reformulate the result for the density $u=u(x,t)$ via the scaling \eqref{eq:scaling}. In particular, we determine the class of initial conditions for $u$ such that a unique solution for \eqref{eq:main_u_1} exists without imposing any boundary condition.

The paper is organized as follows. In Section \ref{Prel} we first derive \eqref{eq:main_rho_1}  using the coordinate transformation and the scaling \eqref{eq:scaling}, then we list the  assumptions and we collect some useful tools and results that we will apply to prove the main result stated in Theorem \ref{main_th}. Section \ref{MainProblem} is devoted to the proof of our main result; more precisely, we prove existence and uniqueness for the rescaled density $\rho$ in Section \ref{sec:JKO} and Section \ref{k-flow}, respectively.  In Section  \ref{E-U} we reformulate the result obtained for $\rho$ in terms of the density function $u$. Finally, in Section \ref{SpecialCases} we focus on three relevant cases as the Heat equation, linear Fokker-Planck equation and Porous Medium equation with degenerate mobility.

\section{Preliminaries}\label{Prel}
In this section we collect general assumptions and properties on functions $a$, $g$, $V$ and $W$ that are involved in the definition of the equations  
    \eqref{eq:main_u_1} and  \eqref{eq:main_rho_1}. Moreover, we derive equation \eqref{eq:main_rho_1} and we recall the notion of Wasserstein gradient flow and the extension version of the Aubin-Lions Lemma.  Finally, we prove the Flow interchange Lemma \ref{Flow_interchange}.

We use the usual notations $h^\prime(z)$ and $\partial_z h$ to denote the first derivative of a function $h$ depending only on one variable and the first order partial derivative for $h$ depending on two variables; respectively. 
To the aim to not overburden the notations, we will use  also any of the following notations $h_z\,, [h]_z\,, (h)_z$ to denote the first derivative or first order partial derivative. We leave the interpretation up to the reader, it will be clear anyway from the context.
Similarly, for the second derivative and for the second order partial derivative. 

\subsection{Derivation of nonlinear convection-diffusion equation on $\R$ with homogeneous mobility}\label{sec:scaling}
We want to derive equation \eqref{eq:main_rho_1} from equation \eqref{eq:main_u_1} by applying the scaling \eqref{eq:scaling}. More precisely, we replace
$$
 u(x,t)=\alpha^\prime(x) \rho(\alpha(x),t),
$$
into \eqref{eq:main_u_1} and we obtain
\begin{align} \label{eq:step0}
  \alpha^\prime\rho_t\circ\alpha = \big(g^2\alpha^\prime\rho\circ\alpha[\varphi'(\alpha^\prime\rho\circ\alpha)+W]_x\big)_x\,.
\end{align}
We define now the functions $a:\R\to\setR_+$ and $V:\R\to\setR$ as 
\begin{equation}\label{eq:new_coefficient}
  a(y):= \frac{1}{g(\alpha^{-1}(y))}\,,\qquad V(y):=W(\alpha^{-1}(y))\,.
\end{equation}
Hence, by \eqref{eq:alpha} we have that
\begin{align}\label{defalpha1}
  a\circ\alpha (x)= \frac{1}{g(x)}=\alpha^\prime(x)\,, \quad V\circ\alpha(x) = W(x)\,,
\end{align}
and
\begin{equation}\label{defalpha2}
\alpha^{\prime\prime}(x)= (a^\prime\circ\alpha) \alpha^\prime\,, \quad W^\prime(x)= \alpha^\prime(x) V^\prime\circ\alpha(x)\,.
\end{equation}
Therefore, we have that
\begin{eqnarray}\label{[]x}
\nonumber [\varphi^\prime(\alpha^\prime\rho\circ\alpha)]_x &=&
\varphi^{\prime\prime}(\alpha^\prime\rho\circ\alpha) [\alpha^{\prime\prime} \rho\circ\alpha + (\alpha^{\prime})^2\partial_y\rho\circ\alpha]\\
\nonumber &=& \alpha^{\prime} \varphi^{\prime\prime}(a \rho\circ\alpha) [a^{\prime} \rho\circ\alpha + a\partial_y\rho\circ\alpha]\\
&=&
\alpha^{\prime}  [\varphi'(a \rho)]_y\circ\alpha\,.
\end{eqnarray}
By applying \eqref{defalpha1}, \eqref{defalpha2}, and \eqref{[]x} we have that the metric factor in \eqref{eq:step0} disappears and the equation \eqref{eq:step0}  becomes
\begin{eqnarray}
  \label{eq:step1}
\nonumber   \alpha^\prime\rho_t\circ\alpha &=& \big(\rho\circ\alpha[\varphi^\prime(a\rho)+V]_y\circ\alpha\big)_x\\
&=& \alpha^\prime \big(\rho[\varphi^\prime(a\rho)+V]_y\big)_y \circ\alpha\,.
\end{eqnarray}
Therefore, we get equation \eqref{eq:main_rho_1}.


\bigskip

\subsection{Main assumptions and properties}\label{Assumptions}
We assume that the mobility function $g:\Omega\to \left[0,1\right]$ is a $C^2(\Omega)$ function satisfying the following
conditions:
\begin{itemize}
    \item[(g1)] $g$ has a maximum point at $g(0) = 1$ and $g(\pm 1) = 0$,
    \item[(g2)] the Osgood condition (\ref{M1}),
    \item[(g3)] there exists a constant $C_g>0$ such that $0\leq (g^\prime)^2-g\, g^{\prime\prime}\leq C_g$. 
\end{itemize}
We collect in the following Proposition some usefull properties of the function $a$ defined in \eqref{eq:new_coefficient}.
\begin{prop}\label{aProperties}
Let $g$ be a function as above satisfying  {\rm (g1), (g2) and (g3)}. Let $\alpha$ and $a$ be defined as in \eqref{eq:alpha} and \eqref{eq:new_coefficient}, respectively. Then, $a:\R\mapsto [1, +\infty)$ is a convex function satisfying the following properties:
\begin{itemize}
\item[(i)] $a(y)\geq a(0)=1$\,;
\item[(ii)] $|a'(y) / a(y)|\le C_a$ and $y\, a'(y) / a(y) \ge 0$\,;
\item[(iii)] the ratio $a''/a$ remains bounded for all $y\in\R$\,.
\end{itemize}
In particular, if $g(x)=(1-x^2)^{p/2}$, with $p\ge 2$, then conditions (i) and (ii) are still satisfied with $C_a=p$. Moreover,
condition (iii) still holds for every $p\ge 2$ and 
 $a^{\prime\prime}$ is bounded for every $p\ge 4$.
\end{prop}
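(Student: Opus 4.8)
The plan is to reduce every statement about $a$ to a statement about $g$ by pulling back to the original variable $x=\alpha^{-1}(y)$, the key device being that by \eqref{defalpha1} one has $a\circ\alpha=\alpha'=1/g$ along $y=\alpha(x)$. Differentiating the identity $a(\alpha(x))=1/g(x)$ in $x$ and using $\alpha'=1/g$ yields the first master identity
\begin{equation*}
\frac{a'}{a}\circ\alpha = -g',
\end{equation*}
and differentiating the resulting relation once more in $x$ and again dividing by $\alpha'=1/g$ yields the second master identity
\begin{equation*}
\frac{a''}{a}\circ\alpha = (g')^2-g\,g''.
\end{equation*}
Once these are in hand the proposition becomes a dictionary that translates {\rm (g1)--(g3)} into {\rm (i)--(iii)}.

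Indeed, property (i) is immediate: $g\le g(0)=1$ forces $a=1/g\ge1$ with equality at $y=0$, and since $a$ is continuous with $a\to+\infty$ as $y\to\pm\infty$ (because $g(\pm1)=0$) its range is exactly $[1,+\infty)$. Convexity of $a$ and property (iii) follow at once from the second master identity and {\rm (g3)}: from $0\le(g')^2-g\,g''\le C_g$ and $a>0$ we get $0\le a''\le C_g\,a$, so $a''\ge0$ and $a''/a\le C_g$. The sign statement in (ii) follows from {\rm (g1)}: $g$ increases on $(-1,0)$ and decreases on $(0,1)$, so $-g'$ has the sign of $x$, while $y=\alpha(x)$ has the sign of $x$ because $\alpha$ is increasing with $\alpha(0)=0$; the first master identity then gives $y\,a'/a\ge0$.

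The only point requiring a genuine argument is the bound $|a'/a|\le C_a$. I would set $w:=a'/a$, observe that $w(0)=-g'(0)=0$ since $g$ has an interior maximum at $0$, and compute $w'=a''/a-(a'/a)^2\le C_g-w^2$ using {\rm (g3)}. Comparing $w$ on $[0,+\infty)$ with the solution $W(y)=\sqrt{C_g}\,\tanh(\sqrt{C_g}\,y)$ of $W'=C_g-W^2$, $W(0)=0$, gives $0\le w<\sqrt{C_g}$, and the mirror argument on $(-\infty,0]$ gives $-\sqrt{C_g}<w\le0$, so that $|a'/a|\le\sqrt{C_g}=:C_a$. I expect this comparison (Riccati) step to be the main obstacle, since it is the only place where a pointwise translation of the hypotheses does not suffice and one must integrate a differential inequality.

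Finally, for the reference example $g(x)=(1-x^2)^{p/2}$ I would simply insert $g'(x)=-p\,x(1-x^2)^{p/2-1}$ into the two master identities. The first gives $a'/a=p\,x(1-x^2)^{p/2-1}$, which has the sign of $x$ and, since $|x|\le1$ and $(1-x^2)^{p/2-1}\le1$ for $p\ge2$, satisfies $|a'/a|\le p$; this sharp direct bound yields (ii) with $C_a=p$. A short computation gives $(g')^2-g\,g''=p(1+x^2)(1-x^2)^{p-2}$, which is nonnegative and bounded by $2p$ for $p\ge2$, so (iii) holds; multiplying by $a=(1-x^2)^{-p/2}$ gives $a''=p(1+x^2)(1-x^2)^{p/2-2}$, which remains bounded as $x\to\pm1$ precisely when $p/2-2\ge0$, i.e.\ $p\ge4$.
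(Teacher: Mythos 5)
Your proof is correct and follows the same basic route as the paper: both reduce everything to the variable $x=\alpha^{-1}(y)$ via the identities $(a'/a)\circ\alpha=-g'$ and $(a''/a)\circ\alpha=(g')^2-g\,g''$ (the paper's \eqref{eq:help2}--\eqref{eq:help3}), read off (i), convexity and (iii) from (g1) and (g3), and verify the reference example by the same explicit computations. The one genuine difference is the bound $|a'/a|\le C_a$: the paper simply observes $(a'/a)\circ\alpha=-g'$ and declares the quotient bounded, which tacitly assumes that $g'$ stays bounded up to $x=\pm1$ --- a fact not literally contained in (g1)--(g3) for a function that is only $C^2$ on the open interval $\Omega$. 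Your Riccati comparison (set $w=a'/a$, note $w'=a''/a-w^2\le C_g-w^2$ with $w(0)=-g'(0)=0$, and compare with $\pm\sqrt{C_g}\,\tanh(\sqrt{C_g}\,|y|)$) derives $|a'/a|\le\sqrt{C_g}$ from the stated hypotheses alone, so it closes a small gap in the paper's argument and yields an explicit admissible constant $C_a=\sqrt{C_g}$; this is consistent with the example, where the sharper direct bound $C_a=p$ is obtained exactly as in the paper. One minor caveat: you attribute the sign condition $y\,a'(y)/a(y)\ge0$ to (g1) via monotonicity of $g$ on each half-interval, but (g1) only asserts that $0$ is a maximum point and does not by itself exclude other critical points of $g$; the paper instead deduces the sign from the convexity of $a$ together with $a(0)=\min a$ (a supporting-line argument), which you already have in hand at that stage, so the repair is immediate.
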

\begin{proof} 
By (g1) and \eqref{defalpha1} we have that $a$ has a global minimum at $y=0$, that implies  condition (i).
By \eqref{eq:alpha} and eqref{defalpha2} we have that
\begin{equation}\label{eq:help2}   
a^\prime\circ\alpha(x) = -\frac{g^\prime(x)}{g(x)}\,,\quad \frac{a^\prime}{a}\circ\alpha(x) = - g^\prime(x)\,;
\end{equation}
hence, we have  that $|a'(y) / a(y)|$ remains bounded. Moreover
\begin{equation} \label{eq:help3}   
a^{\prime\prime}\circ \alpha(x)  = \frac{(g^\prime(x))^2-g^{\prime\prime}(x)g(x)}{g(x)}\,\quad 
\frac{a^{\prime\prime}}{a}\circ \alpha(x)= (g^\prime(x))^2-g^{\prime\prime}(x)g(x)\,;
\end{equation}
therefore by (g3) we get that the function $a$ is convex  and condition (iii) is satisfied. In particular, the convexity of $a$ implies that  $a(y) - a'(y) y \le a(0)$\ie
$$
\frac{a'(y)}{a(y)} y\ge 0\,.
$$ 
In the case $g(x)=(1-x^2)^{p/2}$, a direct computation shows
$$
\frac{a'(y)}{a(y)} = p\,  \alpha^{-1}(y) (1- (\alpha^{-1}(y))^2)^{p/2-1}\,;
$$ 
hence, $|a'(y) / a(y)|\le p$ for every $y\in\R$, and
$$
a^{\prime\prime}(y) = p\,  \Bigl(1+ (\alpha^{-1}(y))^2\Bigr) \Bigl( 1- (\alpha^{-1}(y))^2\Bigr)^{p/2-2}\,.
$$
 For this particular choice of $g$ then  $a^{\prime\prime}$ is bounded for every $p\ge 4$. Moreover, since
$$
\frac{a^{\prime\prime}(y)}{a(y)}=  p\,  \Bigl(1+ (\alpha^{-1}(y))^2\Bigr) \Bigl( 1- (\alpha^{-1}(y))^2\Bigr)^{p-2}
$$
we have that the ratio $a^{\prime\prime}/a$ remains bounded for all $p\ge2$ and $y\in \R$. 
\end{proof}
\bigskip

Let $\varphi:[0,+\infty)\rightarrow \left[0, +\infty\right)$ be a lower semi-continuous and convex function satisfying the following growth conditions:
\begin{itemize}
\item [(D)]  for $m>1$ and $\mu\in \left[m,3m\right)$ there exist two constants $c_m, C_\mu\ge 0$ such that 
$$c_m s^{m-2}\leq \varphi''(s)\leq C_\mu s^{\mu-2},$$
for every $s\ge 0$.
\end{itemize}
Let $W: \Omega\rightarrow\left[0, +\infty\right)$ be a non-negative $C^2(\Omega)$ function.
We further assume that 
\begin{itemize}
    \item[(gW1)] there exists $\lambda\in \R$ such that
$$
\lambda\le g^2  W^{\prime\prime}(x)+g\, g^\prime W^\prime \mbox{ for all }x\in\left[-1,1\right].
$$
\item[(gW2)] there exists $L>0$ such that
$$
\left[g^2(x)W^\prime(x)\right]_x\leq L, \mbox{ for all }x\in\left[-1,1\right].
$$
\end{itemize}

Note that
\begin{align}
  \label{eq:help4}
 V^{\prime\prime}\circ\alpha(x) & = g^2(x)W^{\prime\prime}(x)+  g(x) g^{\prime} (x)W^{\prime} (x)\\
 &= g^2(x)W^{\prime\prime}(x)+\frac12[g^2(x)]_xW^{\prime} (x)\,.
\end{align}

\begin{rem}
We observe that condition (gW1) naturally arises in the porous medium case (see Section \ref{PM}). Indeed, condition (gW1) implies the $\lambda$-convexity of function $V$; while, condition (gW2) implies 
$$
a(y)\left[\frac{V^\prime(y)}{a(y)}\right]_y\leq L,\, \mbox{ for all } y \in \R\,.
$$
\end{rem}
We can now state the main result of the paper.

\begin{thm}\label{main_th}
Let $g:\Omega\to \left[0,1\right]$ be a $C^2(\Omega)$ function under assumptions (g1)-(g3). Let $\varphi:[0,+\infty)\rightarrow \left[0, +\infty\right)$ be a lower semi-continuous and convex function satisfying (D) and be  $W: \Omega\rightarrow\left[0, +\infty\right)$ be a non-negative $C^2(\Omega)$ function under the assumption (gW1)-(gW2). Consider, for $m>1$, the initial condition $u_0\in L^1\cap L^m(\Omega)$ and fix  $T>0$. Then there exists an Holder-continuos curve $u:[0,T]\to L^m(\Omega)$ such that,
\begin{itemize}
\item[(i)] $u\in L^\alpha([0,T]\times \Omega)$ for some $\alpha \in (1,3m)$;
\item[(ii)] $g\partial_x u^{\frac{m}{2}} \in L^2(\left[0,+\infty\right)\times\Omega)$;
\item[(iii)] for almost every $t\in\left[0,+\infty\right)$ and for all $\psi \in C_c^\infty(\Omega)$, we have
\begin{equation}\label{eq:weak_solutions_u}
    \frac{d}{dt}\int_\Omega\psi(x) u(x,t) dx = -\int_\Omega g^2(x)\partial_x\left(  \varphi'(u)+W(x) \right)\partial_x\psi(x)u(x,t)dx.
    \end{equation}
\end{itemize}

\end{thm}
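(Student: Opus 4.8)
The plan is to avoid attacking \eqref{eq:main_u_1} directly and instead develop the whole theory for the rescaled density $\rho$ solving \eqref{eq:main_rho_1} on all of $\R$, transferring the conclusions back to $u$ through the mass-preserving scaling \eqref{eq:scaling}. The starting observation is that, as recorded in Section \ref{sec:scaling}, \eqref{eq:main_rho_1} is the formal Wasserstein gradient flow of the functional $\mF^a$ in \eqref{functional}. Hence, once $\mF^a$ is shown to be geodesically $\lambda$-convex on $(\mP_2(\R),d_W)$, existence, uniqueness and the curve regularity all follow from the abstract theory of \cite{AGS}; the remaining work is to extract the quantitative bounds (i)--(ii) and to reformulate everything in the $x$ variable.

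The technical heart, and the step I expect to be the main obstacle, is the displacement $\lambda$-convexity of $\mF^a$. The difficulty is the spatial weight $a(y)$ in the internal-energy term $\int_\R \varphi(a(y)\rho)/a(y)\,dy$, so that the classical McCann criterion for functionals of the pure form $\int F(\rho)\,dy$ does not apply. Following \cite{DFMatthes}, I would compute the second variation of $\mF^a$ along a displacement interpolation and collect the hypotheses that force it to be bounded below by $\lambda\, d_W^2$: the growth condition (D) controls the genuinely nonlinear contribution of $\varphi$, the convexity of $a$ together with the bounds on $a'/a$ and $a''/a$ from Proposition \ref{aProperties} controls the weight-induced cross terms, and (gW1) yields the $\lambda$-convexity of the potential part $\int_\R V\rho\,dy$. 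Assembling these into a single lower bound with an explicit $\lambda$ is the most delicate computation.

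With $\lambda$-convexity available, I would construct $\rho$ by the JKO minimizing-movements scheme \cite{JKO}: for a step $\tau>0$ define the iterates as minimizers of $\sigma\mapsto \tfrac{1}{2\tau}d_W^2(\sigma,\rho^{n-1})+\mF^a[\sigma]$, started from the scaled datum obtained from $u_0$ via \eqref{eq:scaling} (which one verifies has finite mass, second moment and energy), then interpolate in time and let $\tau\to0$. The coercivity coming from the lower bound $\varphi''\ge c_m s^{m-2}$ in (D) gives, after unscaling by $\int_\Omega u^m\,dx=\int_\R a^{m-1}\rho^m\,dy$, the uniform $L^m(\Omega)$ control underlying the curve $u:[0,T]\to L^m(\Omega)$, while the energy-dissipation inequality of the scheme bounds the metric derivative and yields the Hölder-in-time estimate. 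The additional regularity (ii) is obtained by the Flow interchange technique of Lemma \ref{Flow_interchange} against a suitable auxiliary entropy, producing the integrated dissipation estimate that, once written back in the $x$ variable, is exactly $g\,\partial_x u^{m/2}\in L^2$; the integrability (i) then follows by interpolating $L^1$ mass conservation against these bounds, using the upper growth $\mu<3m$ in (D). Strong space-time compactness of the interpolants, needed to pass to the limit in the discrete Euler--Lagrange relation, is supplied by the extended Aubin--Lions Lemma recalled in Section \ref{Prel}.

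Finally I would return to $u$. Since the scaling \eqref{eq:scaling} is a mass-preserving bijection between curves in $(\mP_2(\R),d_W)$ and densities on $\Omega$, the Hölder regularity, the uniqueness and the contraction estimate transfer verbatim. The weak formulation \eqref{eq:weak_solutions_u} is recovered by taking $\psi\in C_c^\infty(\Omega)$ and substituting $u=\alpha'\,\rho\circ\alpha$: because the Osgood condition (g2) makes $\alpha$ map the compact support of $\psi$ onto a compact subset of $\R$, the transformed test function is admissible for the weak form of \eqref{eq:main_rho_1}, and reversing the computation \eqref{[]x}--\eqref{eq:step1} shows the two weak formulations coincide. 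It is precisely this sending of the edges $x=\pm1$ to $y=\pm\infty$, where the mass of $\rho$ decays, that allows the statement to hold with \emph{no} prescribed flux at the boundary.
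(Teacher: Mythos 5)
Your plan locates the ``technical heart'' in the wrong place, and this creates a genuine gap. You propose to first establish displacement $\lambda$-convexity of $\mF^a$ and then obtain existence, uniqueness and curve regularity from the abstract theory of \cite{AGS}. But under the hypotheses of Theorem \ref{main_th} --- only (g1)--(g3), (D), (gW1)--(gW2) --- the functional $\mF^a$ is in general \emph{not} geodesically $\lambda$-convex: the paper's own analysis in Section \ref{SpecialCases} shows that even for the model mobility $g(x)=(1-x^2)^{p/2}$ and $\varphi(s)=s^m/(m-1)$ the joint convexity of $H(y,\eta)+V(y)-\frac{k}{2}y^2$ required by Lemma \ref{lem:k-flow} holds only under additional restrictions such as $m>2$ and $p\le 2m$. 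A proof of the existence statement that is conditional on this convexity therefore would not cover the stated generality. The paper avoids the issue entirely: existence is proved by the JKO scheme \emph{without any convexity of} $\mF^a$ (direct methods for each minimization step, the H\"older estimate coming from the scheme itself, the flow interchange of Lemma \ref{Flow_interchange} against the Boltzmann entropy to get the $H^1$ bound on $\rho^{m/2}$, extended Aubin--Lions plus Gagliardo--Nirenberg for strong $L^\mu$ compactness with $\mu<3m$, and a perturbation argument for the discrete Euler--Lagrange equation). The $\lambda$-convexity computation enters only as an \emph{additional} hypothesis in Theorem \ref{thm:rho}, used to derive the E.V.I. contraction and hence uniqueness; Theorem \ref{main_th} as stated asserts only existence.

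The remainder of your outline --- starting the scheme from $\rho_0=g(\alpha^{-1}(\cdot))\,u_0(\alpha^{-1}(\cdot))$, the flow-interchange dissipation estimate yielding $\partial_y\rho^{m/2}\in L^2$, the identity $\int_\Omega u^m\,dx=\int_\R a^{m-1}\rho^m\,dy$ for the $L^m$ control of $u$, and the substitution $\zeta=\psi\circ\alpha^{-1}$ together with the observation that the Osgood condition sends the edges $x=\pm1$ to $y=\pm\infty$ so that no flux condition is needed --- coincides with the paper's argument and is sound. To repair the proposal, decouple existence from convexity: run the JKO and flow-interchange machinery unconditionally under (g1)--(g3), (D), (gW1)--(gW2), and reserve the convexity of $H(y,\eta)+V(y)-\frac{k}{2}y^2$ for the separate, conditional uniqueness statement.
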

Note that the result above holds without prescribing any boundary condition on $u$.
\smallskip

\subsection{Preliminaries on Wasserstein gradient flows}\label{subsec:AGS}
We recall some basic notions in Optimal Transport theory, see \cite{AGS,S,V2}. Let us denote with $\mP(\R)$ the space of all probability measures on $\R$ and with $\mP_2(\R)$ the set of all probability measures with finite second moment, i.e.
$$
\mP_2(\R)=\left\{\rho\in\mP(\R):m_2(\rho)<+\infty\right\},
$$
where
$$m_2(\rho)=\int_{\Rd}|x|^2\,d\rho(x).$$
Consider now a measure $\rho\in\mP(\R)$ and a Borel map $T:\R^d\to\R^n$. We denote by $T_{\#}\rho$ the push-forward of $\rho$ through $T$, defined by
$$
\int_{\R^n}f(y)\,dT_{\#}\rho(y)=\int_{\R^d}f(T(x))\,d\rho(x) \qquad \mbox{for all $f$ Borel functions on}\ \R^n.
$$
Let us recall the $2$-Wasserstein distance between $\mu_1,\mu_2\in \mP_2(\R)$ defined by
\begin{equation}\label{wass}
W_2^2(\mu_1,\mu_2)=\min_{\gamma\in\Gamma(\mu_1,\mu_2)}\left\{\int_{\R^{2}}|x-y|^2\,d\gamma(x,y)\right\},
\end{equation}
where $\Gamma(\mu_1,\mu_2)$ is the class of all transport plans between $\mu_1$ and $\mu_2$, that is the class of measures $\gamma\in\mP_2(\R)^2$ such that, denoting by $\pi_i$ the projection operator on the $i$-th component of the product space, the marginality condition
$$
(\pi_i)_{\#}\gamma=\mu_i \quad \mbox{for}\ i=1,2,
$$
is satisfied. Setting $\Gamma_0(\mu_1,\mu_2)$ as the class of optimal plans, i.e. minimizers of \eqref{wass}, we can write the Wasserstein distance as
$$
W_2^2(\mu_1,\mu_2)=\int_{\R^2}|x-y|^2\,d\gamma(x,y), \qquad \gamma\in\Gamma_0(\mu_1,\mu_2).
$$


For $I\subset \R$ we consider an absolutely continuous curve in $W_2$, $\rho : I\to \mP_2(\R)$, namely a curve  such that there exists a function $g\in L_{loc}^1(I)$ such that 
\[
 W_2(\rho(t),\rho(s))\leq |\int_s^t g(\tau)d\tau| \quad \mbox{ for all }t,s\in I.
\]

We introduce the concept of \textit{k-flow}, which is linked with the $\lambda$-convexity along geodesics, see \cite{DS,OW} for further details.
\begin{defin}\label{EVI}
A semigroup $G_\Psi :\left[0,+\infty\right]\times\mP_2(\R) \to \mP_2(\R)$ is a $k-$flow for a functional $\Psi :\mP_2(\R) \to \R\cup\left\{+\infty\right\}$ with respect to the Wasserstein distance $W_2$ if, for an arbitrary $\rho \in \mP_2(\R)$, the curve $s\mapsto G_\Psi^s\rho$ is absolutely continuous on  $\left[0,+\infty\right]$ and satisfies the Evolution Variational Inequality (E.V.I.)
\begin{equation}\label{eq:EVI}
 \frac{1}{2}\frac{d^+}{d\sigma}W_2^2( G_\Psi^\sigma\rho,\tilde{\rho})|_{\sigma =s}+\frac{k}{2}W_2^2( G_\Psi^s\rho,\tilde{\rho})\leq \Psi(\tilde{\rho})-\Psi( G_\Psi^\sigma\rho),
\end{equation}
for all $s>0$ and for any $\tilde{\rho}\in \mP_2(\R)$, such that $\Psi(\tilde{\rho})<\infty$.
\end{defin}
\begin{rem}
The symbol $d^+/d\sigma$ stands for the limit superior of the respective difference quotients and equals to the derivative if the latter exists.
\end{rem}
\begin{thm}
Asssume that a functional $\Psi :\mP_2(\R) \to \R\cup\left\{+\infty\right\}$ is $\lambda$-convex (along geodesics), with a modulus of convexity $\lambda\in \R$, that is, along every constant speed geodesic $\rho :\left[0,1\right]\to \mP_2(\R)$
\[ 
 \Psi\left[\rho(t)\right]\leq (1-t)\Psi\left[\rho(0)\right]+t\Psi\left[\rho(1)\right]-\frac{\lambda}{2}t(1-t)W_2^2(\rho(0),\rho(1))
\]  
holds for every $t\in \left[0,1\right]$. Then $\Psi$ posses a uniquely determined $k-$flow, with some $k\geq \lambda$. Conversely, if a functional $\Psi$ posses a $k-$flow, and if is monotonically non-increasing along that flow, then $\Psi$ is $\lambda$-convex, with some $\lambda\geq k$.
\end{thm}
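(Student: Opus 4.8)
The plan is to treat the two implications separately. For the forward implication—that geodesic $\lambda$-convexity yields a uniquely determined $k$-flow with some $k\geq\lambda$—I would follow the minimizing movement (JKO) construction. Fix $\tau>0$ and, starting from $\rho^0_\tau=\rho$, define recursively $\rho^{n+1}_\tau$ as a minimizer of the map $\sigma\mapsto \frac{1}{2\tau}W_2^2(\sigma,\rho^n_\tau)+\Psi(\sigma)$ on $\mP_2(\R)$. The $\lambda$-convexity of $\Psi$ together with the convexity of $\sigma\mapsto W_2^2(\sigma,\cdot)$ along geodesics guarantees that, for $\tau$ small, this minimizer exists and is unique. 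The central estimate is the \emph{discrete} E.V.I.: comparing $\rho^{n+1}_\tau$ with an arbitrary $\tilde\rho\in\mP_2(\R)$ along the constant-speed geodesic joining them, and using that $\Psi$ lies above its $\lambda$-parabola while $\frac{1}{2\tau}W_2^2(\cdot,\rho^n_\tau)$ is controlled from below, one obtains
\[
\frac{1}{2\tau}\left[W_2^2(\rho^{n+1}_\tau,\tilde\rho)-W_2^2(\rho^n_\tau,\tilde\rho)\right]+\frac{\lambda}{2}W_2^2(\rho^{n+1}_\tau,\tilde\rho)\leq \Psi(\tilde\rho)-\Psi(\rho^{n+1}_\tau).
\]

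Next I would interpolate the discrete values into a curve $s\mapsto\rho_\tau(s)$, extract from the standard telescoping energy-dissipation bound uniform a priori estimates on the metric velocity and on $\Psi$ along the scheme, and pass to a limit curve $s\mapsto G_\Psi^s\rho$ as $\tau\to 0$. Passing to the limit in the discrete E.V.I.—using lower semicontinuity of $\Psi$ and of $W_2$ and a discrete-to-continuous integration argument—produces the continuous E.V.I. \eqref{eq:EVI} with parameter $k=\lambda$ (hence a $k$-flow with $k\geq\lambda$). Uniqueness of the flow is then automatic: adding the two inequalities \eqref{eq:EVI} written for two solutions $G^s_\Psi\rho_1$ and $G^s_\Psi\rho_2$ and applying Gronwall yields the contraction $W_2(G^s_\Psi\rho_1,G^s_\Psi\rho_2)\leq e^{-ks}W_2(\rho_1,\rho_2)$, which forces the semigroup to be unique.

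For the converse—a $k$-flow along which $\Psi$ is non-increasing forces $\lambda$-convexity with some $\lambda\geq k$—I would use the Eulerian/action calculus of \cite{DS,OW}. Given a constant-speed geodesic $\rho:[0,1]\to\mP_2(\R)$, I would run the flow on the geodesic points and exploit the inequalities \eqref{eq:EVI}, evaluated at the well-chosen reference measures $\rho(0)$ and $\rho(1)$, to bound from below the second incremental quotients of the map $t\mapsto\Psi(\rho(t))$. Integrating the resulting differential inequalities along the geodesic, and invoking the monotonicity of $\Psi$ along the flow to absorb the first-order terms, reconstructs the parabolic lower bound that defines $\lambda$-convexity; tracking the constant through the integration gives $\lambda\geq k$.

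The hard part will be the passage to the limit in the discrete E.V.I.\ in the forward implication: one must replace the finite increment $W_2^2(\rho^{n+1}_\tau,\tilde\rho)-W_2^2(\rho^n_\tau,\tilde\rho)$ by the derivative $\frac{d^+}{d\sigma}W_2^2(G^\sigma_\Psi\rho,\tilde\rho)$ and show that the accumulated error vanishes as $\tau\to 0$ \emph{without} degrading the sharp constant. This is exactly where the geodesic convexity of both $\Psi$ and the squared Wasserstein distance is indispensable, and where the finer estimates of \cite{AGS}—or the Eulerian reformulation of \cite{OW}—are needed; the remaining compactness and semicontinuity steps are comparatively routine.
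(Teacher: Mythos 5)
This theorem is stated in the paper as a recalled background result, with no proof given: the authors simply quote it from the gradient-flow literature (\cite{AGS,DS,OW}), so there is no in-paper argument to compare yours against. Your outline reproduces the standard proofs from those references: the forward implication is the minimizing-movement construction with the discrete E.V.I.\ and the $\tau\to0$ limit as in \cite{AGS}, and the converse (E.V.I.\ plus monotonicity implies geodesic convexity) is the Daneri--Savar\'e/Otto--Westdickenberg argument \cite{DS,OW}. So the approach is the right one and matches what the paper implicitly relies on.

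One point you should make explicit rather than assert in passing: the claim that $\sigma\mapsto W_2^2(\sigma,\mu)$ is convex along geodesics, which you use both to get well-posedness of the single JKO step and to run the discrete E.V.I., is \emph{false} in $\mP_2(\R^d)$ for $d\ge 2$ (the Wasserstein space has nonnegative curvature there, and one must work with generalized geodesics with base $\mu$, as in Assumption 4.0.1 of \cite{AGS}). It is true here only because the theorem is set in $\mP_2(\R)$, which is isometric to a convex subset of $L^2(0,1)$ via monotone rearrangement, so that squared distances are genuinely $1$-convex along geodesics. Without either invoking the one-dimensional flatness or replacing geodesics by generalized geodesics, the forward implication as you have written it does not go through. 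With that caveat supplied, the remaining steps (telescoping energy estimates, lower semicontinuity, Gronwall for uniqueness, and the second-difference argument for the converse) are the standard ones and are correctly identified.
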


We now recall an extension of the Aubin-Lions Lemma first introduced in \cite{RS}. This result  will be  used later to  prove the existence of weak solutions to \eqref{eq:main_rho_1}. 

\begin{thm}[Extended Aubin-Lions Lemma \cite{RS}]\label{ex_AL}
On a Banach space $X$, let be given
\begin{itemize}
    \item a normal coercive integrand $\mathcal{Y}:X\to [0,\infty]$, i.e. $\mathcal{Y}$ is lower semi-continuous and its sub-levels are relatively compact in $X$;
    \item a pseudo-distance $\mathrm{d}:X\times X \to [0,\infty]$, i.e. $\mathrm{d}$ is lower semi-continuous and $\mathrm{d}(\rho,\eta)=0$ for any $\rho,\eta\in X$ with $\mathcal{Y}(\rho), \mathcal{Y}(\eta)<\infty$ implies $\rho=\eta$.  
\end{itemize}
Let further $U$ be a set of measurable functions $u:[0,T]\to X$, with a fixed $T>0$. If
\begin{equation}\label{eq:al}
    \sup_{u\in U}\int_0^T\mathcal{Y}[u(t))] dt<\infty \mbox{ and }\lim_{h\downarrow 0}\sup_{u\in U}\int_0^{T-h}\mathrm{d}\left(u(t+h),u(t)\right)dt =0, 
\end{equation}
$U$ contains an infinite sequence $\left\{u_n\right\}_{n\in \mathbb{N}}$ that converges in measure (with repect to $t\in [0,T]$) to a limit $u :[0,T] \to X$.
\end{thm}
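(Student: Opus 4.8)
The plan is to prove that the whole family $U$ is sequentially relatively compact in the topology of convergence in measure; the required sequence is then any convergent subsequence. To metrize that topology I would fix the bounded metric $\varrho(\rho,\eta):=\min\{\|\rho-\eta\|_X,1\}$, which induces the norm topology of $X$ and makes $(X,\varrho)$ complete, and work in the space $\mathcal{M}$ of (classes of) measurable maps $[0,T]\to X$ endowed with $\mathbf{d}_{\mathcal{M}}(u,v):=\int_0^T\varrho(u(t),v(t))\,dt$. Convergence in $\mathbf{d}_{\mathcal{M}}$ is exactly convergence in measure, and $(\mathcal{M},\mathbf{d}_{\mathcal{M}})$ is complete, so it suffices to show that $U$ is totally bounded in $\mathbf{d}_{\mathcal{M}}$. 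The proof rests on two uniform ingredients extracted from the two conditions in \eqref{eq:al}: a spatial tightness coming from $\mathcal{Y}$, and a temporal equicontinuity in measure coming from $\mathrm{d}$.

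First I would record the spatial tightness. Writing $\mathcal{S}:=\sup_{u\in U}\int_0^T\mathcal{Y}[u(t)]\,dt<\infty$ and using Markov's inequality, for every $M>0$ and every $u\in U$ one has $|\{t:\mathcal{Y}[u(t)]>M\}|\le\mathcal{S}/M$. Since $\mathcal{Y}$ is lower semicontinuous with relatively compact sublevels, the set $K_M:=\{\mathcal{Y}\le M\}$ is closed and compact, so up to a time-set of measure $\le\mathcal{S}/M$ every $u$ takes values in the fixed compact set $K_M$, uniformly in $u\in U$. The conceptual heart of the argument is then the following link between the pseudo-distance $\mathrm{d}$ and the genuine metric $\varrho$ on such compact sets: for every compact $K\subset X$ on which $\mathcal{Y}$ is finite and every $\sigma>0$ there exists $\beta=\beta(\sigma,K)>0$ such that $\rho,\eta\in K$ and $\mathrm{d}(\rho,\eta)\le\beta$ force $\varrho(\rho,\eta)\le\sigma$. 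This holds because $\{(\rho,\eta)\in K\times K:\varrho(\rho,\eta)\ge\sigma\}$ is compact, the lower semicontinuous function $\mathrm{d}$ attains there a minimum at some $(\rho_*,\eta_*)$ with $\rho_*\neq\eta_*$, and the separation property of $\mathrm{d}$ forces $\mathrm{d}(\rho_*,\eta_*)>0$; one takes $\beta$ equal to this minimum.

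Next I would transfer the integral equicontinuity of \eqref{eq:al} from $\mathrm{d}$ to $\varrho$. Fix $\sigma,\epsilon>0$, choose $M=\mathcal{S}/\epsilon$ and $\beta=\beta(\sigma,K_M)$ as above, and set $\omega(h):=\sup_{u\in U}\int_0^{T-h}\mathrm{d}(u(t+h),u(t))\,dt$, so that $\omega(h)\to0$ as $h\downarrow0$ by hypothesis. For each $u\in U$, Markov's inequality gives $|\{t:\mathrm{d}(u(t+h),u(t))>\beta\}|\le\omega(h)/\beta$, while $u(t),u(t+h)\in K_M$ fails on a time-set of measure at most $2\mathcal{S}/M\le2\epsilon$. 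Outside the union of these exceptional sets both points lie in $K_M$ and $\mathrm{d}(u(t+h),u(t))\le\beta$, hence $\varrho(u(t+h),u(t))\le\sigma$ by the link above. Therefore $\sup_{u\in U}|\{t\in[0,T-h]:\varrho(u(t+h),u(t))>\sigma\}|\le \omega(h)/\beta+2\epsilon$, which is $\le3\epsilon$ for $h$ small. Since $\sigma$ and $\epsilon$ were arbitrary, $U$ is equicontinuous in measure with respect to $\varrho$, uniformly in $u\in U$.

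Finally I would combine uniform tightness with this uniform equicontinuity in measure to deduce total boundedness of $U$ in $(\mathcal{M},\mathbf{d}_{\mathcal{M}})$, which is the metric-valued analogue of the Fréchet--Kolmogorov criterion for convergence in measure and is the main technical obstacle. I expect to prove it by a time-discretization: partitioning $[0,T]$ into $N\approx T/h$ intervals and replacing each $u$ by a step function $\tilde u$ built from suitably chosen samples $u(t_j)$, the equicontinuity in measure controls $\mathbf{d}_{\mathcal{M}}(\tilde u,u)$ uniformly, while tightness confines the defining data $(u(t_0),\dots,u(t_{N-1}))$ to the compact set $K_M^{N}$ up to small measure; covering $K_M^{N}$ by finitely many $\varrho$-balls then yields finitely many step profiles approximating every $u\in U$ to within a prescribed $\mathbf{d}_{\mathcal{M}}$-error, giving total boundedness. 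Completeness of $(\mathcal{M},\mathbf{d}_{\mathcal{M}})$ then produces a subsequence $\{u_n\}\subset U$ that is Cauchy, hence convergent in measure, to a measurable limit $u:[0,T]\to X$. The delicate points, where I would concentrate the effort, are the compactness-based transfer from $\mathrm{d}$ to $\varrho$ of the second paragraph and the uniform control of the discretization error $\mathbf{d}_{\mathcal{M}}(\tilde u,u)$, for which the equicontinuity in measure must be used in its averaged form over lags $s\in(0,h)$ to select good sample points $t_j$.
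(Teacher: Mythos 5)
The paper itself does not prove this theorem: it is quoted from Rossi--Savar\'e \cite{RS} and used as a black box, so there is no internal proof to compare against. Judged on its own merits, your proposal is correct, and it essentially reconstructs the strategy of the original proof in \cite{RS}: (i) tightness via Markov's inequality and compactness of the sublevels $K_M=\{\mathcal{Y}\le M\}$ (closed by lower semicontinuity, relatively compact by assumption, hence compact); (ii) the key compactness lemma upgrading the merely lower semicontinuous, separating pseudo-distance $\mathrm{d}$ to a uniform modulus controlling the norm on $K_M\times K_M$; (iii) transfer of the integral equicontinuity from $\mathrm{d}$ to the bounded metric $\varrho$; (iv) a Fr\'echet--Kolmogorov-type discretization giving total boundedness in the metric $\mathbf{d}_{\mathcal{M}}$ of convergence in measure, followed by completeness of that space. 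This tightness-plus-equicontinuity architecture, with the pseudo-distance-to-metric lemma isolated as a separate statement, is exactly how the result is established in \cite{RS}.

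Two small points should be fixed in a write-up, though neither is a genuine gap. First, in the link lemma you should take $\beta$ strictly smaller than the minimum of $\mathrm{d}$ over the compact set $\{(\rho,\eta)\in K\times K:\varrho(\rho,\eta)\ge\sigma\}$ (say half of it): with $\beta$ equal to the minimum the implication can fail at a minimizer, and one must also allow the minimum to be $+\infty$, in which case any finite $\beta>0$ works. Second, the final total-boundedness step is only sketched; it does go through, but the sample points $t_j$ must be selected exactly as you indicate, by averaging over the lag $\theta\in(0,h)$ (Fubini plus Chebyshev, using that $\omega(s)\to 0$ gives the equicontinuity bound uniformly for \emph{all} lags $s\le h$, not only $s=h$), and the few samples falling outside $K_M$ must be replaced by a fixed point of $K_M$ before invoking a finite $\varrho$-net of $K_M$ to produce the finitely many step profiles.
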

A key ingredient is the so called \emph{flow interchange lemma}, see \cite{DFMatthes} for further details.
\begin{lem}\label{Flow_interchange}
Let $\Psi:\mP_2(\R)\to\left(-\infty,+\infty\right]$ be a lower semi-continuous functional which posses a $k-$flow $G_\Psi$. Define the dissipation of a functional $\mF$ along $G_\Psi$ by
\[ \mathcal{D}_\Psi\mF^a(\rho):=\limsup_{s\downarrow 0}\frac{1}{s}\left(\mF^a\left[\rho\right]-\mF^a\left[G_\Psi^s\rho\right]\right),\]
for every $\rho\in\mP_2(\R)$. If $\rho_\tau^{n-1}$ and $\rho_\tau^n $ are two consecutive steps in the JKO scheme (\ref{JKO}), then
\begin{equation}\label{Ineq-psi}
 \Psi\left[\rho_\tau^{n-1} \right]-\Psi\left[\rho_\tau^{n} \right]\geq \tau \mathcal{D}_\Psi\mF^a(\rho_\tau^n)+\frac{k}{2}W_2^2(\rho_\tau^n,\rho_\tau^{n-1}).
\end{equation} 
In addition, assume that $G_\Psi$ is such that for every $n\in\mathbb{N}$, the curve $s\mapsto G_\Psi^s\rho_\tau^n$ lies in $L^m(\R)$, it is differentiable for $s>0$ and continuous at $s=0$. Let $\mathcal{R}:\mP_2(\R)\to\left(-\infty,+\infty\right]$ be a functional satisfying
\[ 
 \liminf_{s\downarrow0}\left(-\frac{d}{d\sigma}|_{\sigma=s}\mF^a\left[G_\Psi^\sigma\rho_\tau^n\right]\right)\geq \mathcal{R}\left[\rho_\tau^n\right]\,.
\]
Then the following estimate holds:
 for every $n\in\mathbb{N}$,
\begin{equation}\label{FI_1}
 \Psi\left[\rho_\tau^{n-1} \right]-\Psi\left[\rho_\tau^{n} \right]\geq \tau  \mathcal{R}\left[\rho_\tau^n\right]+\frac{k}{2}W_2^2(\rho_\tau^n,\rho_\tau^{n-1});
\end{equation}
In particular, for every $N\in\mathbb{N}$,
\begin{equation}\label{FI_2}
 \Psi\left[\rho_\tau^N \right]\leq\Psi\left[\rho^0 \right]- \tau\sum_{n=1}^N  \mathcal{R}\left[\rho_\tau^n\right]- \frac{k}{2} \sum_{n=1}^NW_2^2(\rho_\tau^n,\rho_\tau^{n-1}).
\end{equation}

\end{lem}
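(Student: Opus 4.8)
The plan is to exploit the variational (minimality) characterization of the JKO iterates by testing the one-step minimization with the $\Psi$-flow of the current iterate; this trade between the two functionals is exactly the mechanism behind the name \emph{flow interchange}. Since $\rho_\tau^n$ minimizes $\rho\mapsto \frac{1}{2\tau}W_2^2(\rho,\rho_\tau^{n-1})+\mF^a[\rho]$ in the scheme (\ref{JKO}), I would insert the competitor $\tilde\rho=G_\Psi^s\rho_\tau^n$ for $s>0$ and rearrange to
\[
\mF^a[\rho_\tau^n]-\mF^a[G_\Psi^s\rho_\tau^n]\le \frac{1}{2\tau}\Big(W_2^2(G_\Psi^s\rho_\tau^n,\rho_\tau^{n-1})-W_2^2(\rho_\tau^n,\rho_\tau^{n-1})\Big).
\]
Dividing by $s$ and taking $\limsup_{s\downarrow0}$ turns the left-hand side into $\mathcal{D}_\Psi\mF^a(\rho_\tau^n)$, so it remains to control the right-hand side.

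For that I would invoke the E.V.I. \eqref{eq:EVI}. As $\sigma\mapsto G_\Psi^\sigma\rho_\tau^n$ is absolutely continuous, so is $\sigma\mapsto W_2^2(G_\Psi^\sigma\rho_\tau^n,\rho_\tau^{n-1})$, and I may write the right-hand side as $\frac{1}{2\tau}\int_0^s\frac{d}{d\sigma}W_2^2(G_\Psi^\sigma\rho_\tau^n,\rho_\tau^{n-1})\,d\sigma$ and bound the integrand via \eqref{eq:EVI} with $\tilde\rho=\rho_\tau^{n-1}$, namely
\[
\frac12\frac{d}{d\sigma}W_2^2(G_\Psi^\sigma\rho_\tau^n,\rho_\tau^{n-1})\le \Psi[\rho_\tau^{n-1}]-\Psi[G_\Psi^\sigma\rho_\tau^n]-\frac{k}{2}W_2^2(G_\Psi^\sigma\rho_\tau^n,\rho_\tau^{n-1}).
\]
Dividing by $s$, taking $\limsup_{s\downarrow0}$, and passing to the limit in the averaged integrals yields \eqref{Ineq-psi}, once I identify the $\sigma\downarrow0$ limits of the two $\sigma$-dependent terms. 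The limit $W_2^2(G_\Psi^\sigma\rho_\tau^n,\rho_\tau^{n-1})\to W_2^2(\rho_\tau^n,\rho_\tau^{n-1})$ follows from continuity of the flow at $\sigma=0$ and continuity of $W_2$; the limit $\Psi[G_\Psi^\sigma\rho_\tau^n]\to\Psi[\rho_\tau^n]$ follows by combining the monotonicity of $\Psi$ along its own flow (giving $\limsup_{\sigma\downarrow0}\Psi[G_\Psi^\sigma\rho_\tau^n]\le\Psi[\rho_\tau^n]$) with the lower semicontinuity of $\Psi$ (giving the matching $\liminf\ge\Psi[\rho_\tau^n]$).

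To pass from \eqref{Ineq-psi} to \eqref{FI_1} I would use the extra regularity of $s\mapsto G_\Psi^s\rho_\tau^n$. Under those hypotheses $\sigma\mapsto\mF^a[G_\Psi^\sigma\rho_\tau^n]$ is differentiable for $\sigma>0$ and continuous at $0$, so
\[
\mF^a[\rho_\tau^n]-\mF^a[G_\Psi^s\rho_\tau^n]=\int_0^s\Big(-\frac{d}{d\sigma}\mF^a[G_\Psi^\sigma\rho_\tau^n]\Big)\,d\sigma;
\]
dividing by $s$ and using $\liminf_{s\downarrow0}\frac{1}{s}\int_0^s f\,d\sigma\ge\liminf_{\sigma\downarrow0}f(\sigma)$ for the integrand $f(\sigma)=-\frac{d}{d\sigma}\mF^a[G_\Psi^\sigma\rho_\tau^n]$ gives $\mathcal{D}_\Psi\mF^a(\rho_\tau^n)\ge\liminf_{\sigma\downarrow0}f(\sigma)\ge\mathcal{R}[\rho_\tau^n]$ by assumption. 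Substituting into \eqref{Ineq-psi} produces \eqref{FI_1}, and summing \eqref{FI_1} over $n=1,\dots,N$ gives \eqref{FI_2} after the left-hand side telescopes to $\Psi[\rho^0]-\Psi[\rho_\tau^N]$. I expect the main obstacle to be the analytic passage to the limit $s\downarrow0$ in the Wasserstein term, i.e.\ justifying the absolute continuity of $\sigma\mapsto W_2^2(G_\Psi^\sigma\rho_\tau^n,\rho_\tau^{n-1})$ and the continuity of $\Psi$ at $\sigma=0$ along the flow; the remaining steps are rearrangements of the E.V.I.\ and of the JKO minimality.
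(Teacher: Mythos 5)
Your argument is correct and is exactly the one the paper relies on: the paper's proof of \eqref{Ineq-psi} is a one-line reference to \cite[Lemma 4.2]{DFMatthes} together with the E.V.I. of Definition \ref{EVI}, the $W_2$-absolute continuity of $s\mapsto G_\Psi^s\rho_\tau^n$, and the JKO minimality, which is precisely the competitor-plus-E.V.I. computation you spell out. The passage from \eqref{Ineq-psi} to \eqref{FI_1} via the fundamental theorem of calculus and the averaged $\liminf$, and the telescoping to \eqref{FI_2}, likewise match the cited argument.
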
 
\begin{proof}
The proof of (\ref{Ineq-psi}) easily follows as in \cite[Lemma 4.2]{DFMatthes}, after recalling Definition \ref{EVI}, the $W_2$- absolute continuity of the curve $s\mapsto G_\Psi^s\rho_\tau^n$ and the definition of $\rho_\tau^{n}, \rho_\tau^{n-1}$ as in (\ref{JKO}).
\end{proof}

\section{Problems with fast-decay mobilities}\label{MainProblem}
In this section we study existence for solutions to \eqref{eq:main_rho_1} and $\lambda$-convexity property for the related functional $\mF^a$ introduced in \eqref{functional}. Let us recall that the equation \eqref{eq:main_rho_1} obtained in Section \ref{sec:scaling} for the scaled density $\rho$ is
\begin{equation}\label{eq:1}
\rho_t = \left(\rho\left(\varphi'(a\rho)+V\right)_y\right)_y\,\mbox{ for }(t,y)\in \left[0,+\infty\right)\times\R.
\end{equation}
For technical convenience we define the following functions
\begin{equation}\label{eq:F&H}
 F^a(y,\eta)=\frac{1}{a(y)}\varphi(a(y)\eta)\,, \quad H(y,\eta)=\eta F^a(y,\frac{1}{\eta}),
\end{equation}
and we reformulate equation \eqref{eq:1} and the functional $\mF^a$ as 
\begin{equation}\label{eq:2}
 \rho_t = (\rho(F_\eta^a(y,\rho)+V)_y)_y\,.
\end{equation}
and
\begin{equation}\label{mFa}
 \mF^a\left[\rho\right]=\int_\R F^a(a(y),\rho(y))dy+\int_\R V(y)\rho(y)\dy\,.
\end{equation}
respectively. At least formally we can introduce the cumulative distribution function $R$ of $\rho$, defined as
\[
 R(t,y)=\int_{-\infty}^y \rho(t,z)\dz\,,
\]
and its pseudo-inverse function 
\begin{equation}\label{defY}
   Y(t,\omega)=\inf\left\{y : R(t,y)>\omega\right\}, 
\end{equation}
for any $y\in \R$, $w\in \left(0,1\right)$ and $t\geq 0$, respectively. Note that
\begin{align}\label{Yproperty}
  Y_\omega\rho\circ Y = 1\,.
\end{align}
The functions $R$ and $Y$ formally satisfy  the equations
\begin{align}
 &  R_t = R_y \left(F_\eta^a(y,R_y)+V(y)\right)_y,\label{eq:distr}\\
 & Y_t = -\frac{1}{ Y_\omega} \Bigl(F_\eta^a\Bigl(Y,\frac{1}{Y_\omega}\Bigr)+V(Y)\Bigr)_\omega\label{eq:pseudo}\,;
\end{align}
respectively. We will make use of this reformulation in Subsection \ref{k-flow}. In Subsection \ref{sec:JKO} we prove existence for weak solutions to \eqref{eq:1} using the so-called \emph{JKO-scheme}.
\begin{defin}\label{def:weak_solutions}
We say that a curve $\rho:\left[0,T\right]\to \mP_2(\R)$ is a weak solution to \eqref{eq:2} if
\begin{itemize}
    \item[(i)] $\rho\in L^\alpha(\left[0,T\right]\times\R)$, with $\alpha\in (1,\mu)$ for all $T>0$;
    \item[(ii)] $\partial_y \rho^{\frac{m}{2}} \in L^2(\left[0,+\infty\right)\times\R)$;
    \item[(iii)] for almost every $t\in\left[0,+\infty\right)$ and for all $\zeta \in C_c^\infty(\R)$, we have
    \begin{equation}\label{eq:weak_solutions}
    \frac{d}{dt}\int_\R\zeta(y) \rho(y,t) dy = -\int_\R \partial_y\left(  \varphi'(a\rho)+V(y) \right)\partial_y\zeta(y)\rho(y,t)dy .
\end{equation}
\end{itemize}
\end{defin}

\subsection{The minimising movements or JKO scheme}\label{sec:JKO}

We will construct solutions to \eqref{eq:main_rho_1} in the spirit of the so called \emph{minimising movements or implicit-Euler scheme}. The application of such method to the Fokker- Planck equation appears  in \cite{JKO}  and it has been deeply used in the literature in the last years. The scheme consists in constructing recursively a sequence as follows: given $\rho\in \mP_2(\R)$, for a fixed time step $\tau>0$ and for every $\eta\in \mP_2(\R)$ we introduce the penalization functional $\Phi_\tau\left(\rho;\eta\right)$ defined by
\begin{equation}\label{eq:pen_fun}
\Phi_\tau\left(\rho;\eta\right)=\frac{1}{2\tau}W_2^2\left(\rho,\eta\right)+\mF^a\left[\rho\right].
\end{equation}
If $\rho^0\in \mP_2(\R)$ is an initial condition with $\mF^a\left[\rho^0\right]<\infty$ we define the sequence  $\left\{\rho_\tau^n\right\}_{n\in\mathbb{N}}$ inductively as follows
\begin{itemize}
\item[(i)] $\rho_\tau^0=\rho^0$;
\item[(ii)] for $n\geq 1$, given $\rho_\tau^{n-1}$, define $\rho_\tau^n$ as
\begin{equation}\label{JKO}
\rho_\tau^n=\mbox{argmin}_{\rho\in\mP_2(\R)}\Phi_\tau\left(\rho;\rho_\tau^{n-1}\right).
\end{equation}
\end{itemize}
Once the sequence is defined we introduce the piece-wise constant interpolation as
\begin{equation}\label{eq:pic_lin}
\bar{\rho}_\tau (t)=\rho_\tau^n \qquad \mbox{for } t\in \left(\left(n-1\right)\tau,n\tau\right].
\end{equation}
\begin{lem}[{\bf Existence of minimzers}]\label{existence_Min}
Under the assumptions (g1) - (g3),(D),(gW1) and (gW2),
for any given $\rho_\tau^{n-1}\in \mP_2(\R)$ the functional $\Phi_\tau\left(\rho; \rho_\tau^{n-1}\right)$ admits a minimiser $\rho_\tau^{n}\in \mP_2(\R)$.
\end{lem}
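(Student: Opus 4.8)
The plan is to prove existence of a minimizer by the direct method of the calculus of variations in the metric space $(\mP_2(\R),W_2)$. The first step is to check that the infimum is finite. Since $\varphi\ge0$, $V\ge0$ and $W_2^2\ge0$, the penalization functional $\Phi_\tau(\cdot\,;\rho_\tau^{n-1})$ defined in \eqref{eq:pen_fun} is non-negative, so $\inf_{\rho}\Phi_\tau(\rho;\rho_\tau^{n-1})\ge0$. On the other hand, testing with the competitor $\rho=\rho_\tau^{n-1}$ gives $\Phi_\tau(\rho_\tau^{n-1};\rho_\tau^{n-1})=\mF^a[\rho_\tau^{n-1}]$, which is finite: the JKO scheme \eqref{JKO} is energy-diminishing, so $\mF^a[\rho_\tau^{n-1}]\le\mF^a[\rho^0]<\infty$ propagates inductively from the standing assumption on the initial datum. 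Hence $0\le m_0:=\inf_{\rho}\Phi_\tau(\rho;\rho_\tau^{n-1})<\infty$, and a minimizing sequence exists.

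The second step is compactness. Let $\{\rho_k\}\subset\mP_2(\R)$ satisfy $\Phi_\tau(\rho_k;\rho_\tau^{n-1})\to m_0$; for $k$ large the values are bounded by some constant $C$. From $\tfrac{1}{2\tau}W_2^2(\rho_k,\rho_\tau^{n-1})\le\Phi_\tau(\rho_k;\rho_\tau^{n-1})\le C$ and the triangle inequality $m_2(\rho_k)^{1/2}\le W_2(\rho_k,\rho_\tau^{n-1})+m_2(\rho_\tau^{n-1})^{1/2}$, the second moments $m_2(\rho_k)$ are uniformly bounded. By Markov's inequality this uniform bound yields tightness, so Prokhorov's theorem provides a subsequence (not relabelled) converging narrowly to some $\rho^\ast\in\mP(\R)$; narrow convergence preserves total mass, and the uniform bound on $m_2$ together with its lower semicontinuity under narrow convergence gives $\rho^\ast\in\mP_2(\R)$.

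The third step, and the crux of the argument, is sequential lower semicontinuity of $\Phi_\tau(\cdot\,;\rho_\tau^{n-1})$ along this convergence. The transport term $\rho\mapsto W_2^2(\rho,\rho_\tau^{n-1})$ is lower semicontinuous with respect to narrow convergence by the standard lower semicontinuity of optimal transport costs. The potential term $\rho\mapsto\int_\R V\rho\dy$ satisfies $\liminf_k\int_\R V\,d\rho_k\ge\int_\R V\,d\rho^\ast$ because $V\ge0$ is continuous, so the Portmanteau theorem (equivalently Fatou's lemma applied to the non-negative lower semicontinuous integrand) applies. The delicate term is the internal energy $\rho\mapsto\int_\R F^a(y,\rho)\dy$, with $F^a(y,\eta)=\varphi(a(y)\eta)/a(y)$ as in \eqref{eq:F&H}. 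Here the map $\eta\mapsto F^a(y,\eta)$ is convex, inheriting convexity from $\varphi$, and it has superlinear growth uniform in $y$: integrating (D) twice gives $\varphi(s)\ge c\,s^m$ for some $c>0$ and all large $s$, whence, using $a(y)\ge1$ from Proposition \ref{aProperties}(i), $F^a(y,\eta)\ge c\,a(y)^{m-1}\eta^m\ge c\,\eta^m$ with $m>1$. Lower semicontinuity of convex, superlinear integral functionals of measures under narrow convergence is classical (see \cite{AGS}), and gives $\liminf_k\int_\R F^a(y,\rho_k)\dy\ge\int_\R F^a(y,\rho^\ast)\dy$. Adding the three lower bounds yields $\Phi_\tau(\rho^\ast;\rho_\tau^{n-1})\le\liminf_k\Phi_\tau(\rho_k;\rho_\tau^{n-1})=m_0$, so $\rho^\ast$ realizes the infimum and is the sought minimizer.

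The main obstacle I anticipate is precisely this last lower semicontinuity of the internal energy: one must verify that the explicit $y$-dependence carried by the coefficient $a$ does not spoil the convexity and superlinearity hypotheses required by the abstract lower semicontinuity theorem, and in particular that the superlinear lower bound is uniform in $y$. This is exactly where the monotonicity $a\ge1$ of Proposition \ref{aProperties}(i) and the lower growth exponent $m$ in (D) are essential; the upper exponent $\mu<3m$ does not enter the existence proof but guarantees that $\mF^a$ is finite on a sufficiently rich class of competitors, which is what makes $m_0<\infty$ meaningful. Uniqueness of the minimizer is not claimed here and would instead require strict displacement convexity, which is not needed for existence.
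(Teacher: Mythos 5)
Your proof is correct and follows essentially the same route as the paper: the direct method, with coercivity and tightness coming from the non-negativity of $\varphi$, $V$ and the Wasserstein penalty, followed by lower semicontinuity of the three terms separately. The only cosmetic difference is that the paper first upgrades narrow convergence to weak-$L^1$ convergence via Dunford--Pettis (using the superlinear growth of $\varphi$) and then invokes the $L^1$-weak lower semicontinuity of integral functionals with positive convex integrands, whereas you apply narrow lower semicontinuity of the convex, uniformly superlinear internal energy directly; the superlinearity, guaranteed by (D) together with $a\ge 1$, plays the same role in both arguments.
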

\begin{proof}
The well-posedness of the scheme is an application of Direct Methods of Calculus of Variations. Indeed, since $\varphi$ and $V$ are non-negative then the functional $\Phi_\tau\left(\rho; \rho_\tau^{n-1}\right)$ satisfies the coercivity condition, that is, 
for any given $\eta\in \mP_2(\R)$ and for every constant $c$ we have that 
$$\inf_{\rho\in \mP_2(\R)}\{c\, W_2^2\left(\rho,\eta\right)+\mF^a\left[\rho\right]\}>-\infty\,.$$ 
Hence, for any given $\rho_\tau^{n-1}\in \mP_2(\R)$  there exists a bounded minimising sequence in $\mP_2(\R)$ that satisfies the integral condition for tightness and therefore, it is tight in $P_2(\R)$ (precompact with respect to the narrow convergence, see e.g. \cite[Remark 5.1.5]{AGS}). Moreover, by the superlinear growth condition at infinity of $\varphi$ and Dunford- Pettis Theorem we have that the minimising sequence is precompact also with respect to the weak-$L^1$ convergence and the weak-$L^1$ limit  $\rho_\tau^{n}\in K$.
The lower semicontinuity of $\Phi_\tau\left(\rho; \rho_\tau^{n-1}\right)$ with respect to the $L^1$-weak convergence easy follows by \cite{AGS}. Indeed, by \cite[Lemma 5.1.7 and Lemma 7.1.4]{AGS}, we have that the functionals $\rho\to \int_\R \rho(y) V(y)\,dy$ and $\rho\to W_2^2\left(\rho,\rho_\tau^{n-1}\right)$ are  lower semicontinuous with respect to the narrow convergence, respectively; therefore they are also $L^1$-weak lower semicontinuous. By classical results on the $L^1$- weak lower semicontinuity of integral functionals  with positive, convex and lower semicontinuous integrand,  we have that also $\rho\to\int_\R \varphi(a\rho)/ a\,dy $ is lower semicontinuous with respect to the $L^1$- weak convergence, which concludes the proof.
\end{proof}

\begin{lem}[{\bf Compactness and limit trajectory}]\label{convergence}
The piecevise constant interpolating sequence $\bar{\rho}_\tau$ narrow converges up to (non-relabelled) sub-sequence to a H\"older continuous limit curve $\rho :\left[0,\infty\right)\to \mP_2(\R)$.
\end{lem}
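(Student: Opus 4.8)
The plan is to apply the Extended Aubin-Lions Lemma (Theorem \ref{ex_AL}) to the family $U = \{\bar{\rho}_\tau\}_\tau$ of piecewise-constant interpolants, obtaining compactness in measure with respect to $t$, and then to upgrade this to narrow convergence and H\"older continuity of the limit. First I would establish the standard \emph{a priori} estimates coming from the JKO scheme: summing the minimality inequality $\Phi_\tau(\rho_\tau^n;\rho_\tau^{n-1}) \le \Phi_\tau(\rho_\tau^{n-1};\rho_\tau^{n-1})$ over $n$ yields the total-energy bound $\mF^a[\rho_\tau^N] + \frac{1}{2\tau}\sum_{n=1}^N W_2^2(\rho_\tau^n,\rho_\tau^{n-1}) \le \mF^a[\rho^0]$. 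Since $\varphi$ and $V$ are non-negative, this controls both the energy along the scheme and, crucially, the \emph{total squared displacement} $\sum_n W_2^2(\rho_\tau^n,\rho_\tau^{n-1}) \le 2\tau\,\mF^a[\rho^0]$.

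Next I would verify the two hypotheses \eqref{eq:al} of Theorem \ref{ex_AL} with the Banach space $X = L^1(\R)$ (or a suitable subspace), the pseudo-distance $\mathrm{d} = W_2$, and a normal coercive integrand $\mathcal{Y}$ whose sublevels are relatively compact in $X$; the natural choice is $\mathcal{Y}[\rho] = \mF^a[\rho] + m_2(\rho)$ augmented so that its sublevels are tight and uniformly integrable (using the superlinear growth of $\varphi$ from assumption (D) together with the second-moment bound, as in Lemma \ref{existence_Min}). The first condition in \eqref{eq:al} follows from the uniform energy bound above. For the second condition, the key computation is the classical \emph{generalized geodesic / H\"older-in-time} estimate: for $0 \le s \le t$ with $s \in ((k-1)\tau, k\tau]$ and $t \in ((l-1)\tau, l\tau]$, one has by the triangle inequality and Cauchy-Schwarz
\[
  W_2(\bar{\rho}_\tau(t),\bar{\rho}_\tau(s)) \le \sum_{n=k+1}^{l} W_2(\rho_\tau^n,\rho_\tau^{n-1}) \le \Bigl(\sum_{n=k+1}^{l} W_2^2(\rho_\tau^n,\rho_\tau^{n-1})\Bigr)^{1/2} (l-k)^{1/2} \le \bigl(2\mF^a[\rho^0]\bigr)^{1/2}\,\bigl(|t-s| + \tau\bigr)^{1/2}.
\]
Integrating $W_2(\bar{\rho}_\tau(t+h),\bar{\rho}_\tau(t))$ over $t \in [0,T-h]$ then gives a bound of order $(h+\tau)^{1/2}$, so that the $\limsup_{h\downarrow 0}\sup_\tau$ vanishes, yielding the second hypothesis of \eqref{eq:al}.

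Having obtained a subsequence $\bar{\rho}_{\tau_n}$ converging in measure in $t$ to some limit $\rho$, I would then combine this with the tightness and weak-$L^1$ compactness established pointwise in $t$ (exactly as in Lemma \ref{existence_Min}) to promote convergence-in-measure to \emph{narrow convergence} for a.e.\ $t$, using a diagonal argument over a countable dense set of times and the lower semicontinuity of $W_2$ and $\mF^a$ to control the exceptional set. Finally, the H\"older continuity of the limit curve $\rho:[0,\infty)\to\mP_2(\R)$ is inherited by passing to the limit in the discrete estimate above: letting $\tau_n \to 0$ in $W_2(\bar{\rho}_{\tau_n}(t),\bar{\rho}_{\tau_n}(s)) \le (2\mF^a[\rho^0])^{1/2}(|t-s|+\tau_n)^{1/2}$ and using lower semicontinuity of $W_2$ under narrow convergence gives $W_2(\rho(t),\rho(s)) \le C\,|t-s|^{1/2}$, i.e.\ the $1/2$-H\"older estimate on $[0,\infty)$.

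The main obstacle I anticipate is verifying that the sublevels of the chosen integrand $\mathcal{Y}$ are genuinely relatively compact in the strong $L^1$ topology (not merely weakly), since the Extended Aubin-Lions Lemma requires a \emph{normal coercive} integrand. The non-homogeneous, unbounded coefficient $a(y)$ appearing in $\mF^a$ complicates the usual equi-integrability argument: one must exploit the lower bound $a \ge 1$ from Proposition \ref{aProperties}(i) together with the coercivity $\varphi(s) \gtrsim s^m$ implied by (D) to bound $\int \rho^m$, and simultaneously control tails via the second moment, so that a refined Dunford-Pettis plus tightness argument yields the required strong compactness. Reconciling the weak-$L^1$ compactness used in the JKO step with the strong-$L^1$ compactness demanded by Theorem \ref{ex_AL} is the delicate technical point of the proof.
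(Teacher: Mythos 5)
Your quantitative core matches the paper's: the total squared displacement bound $\sum_n W_2^2(\rho_\tau^n,\rho_\tau^{n-1})\le 2\tau\,\mF^a[\rho^0]$ from the JKO minimality, the approximate $1/2$-H\"older estimate $W_2(\bar\rho_\tau(t),\bar\rho_\tau(s))\le (2\mF^a[\rho^0])^{1/2}(|t-s|+\tau)^{1/2}$ obtained by the triangle inequality and Cauchy--Schwarz, and the passage to the limit via lower semicontinuity of $W_2$ to obtain the H\"older continuity of the limit curve are exactly the ingredients the paper uses (quoting \cite[Theorem 11.1.6, Steps 1--2]{AGS} for the same computation).

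The gap is in the compactness mechanism. You route the argument through the Extended Aubin--Lions Lemma (Theorem \ref{ex_AL}) with $X=L^1(\R)$, which requires a normal coercive integrand $\mathcal{Y}$ whose sublevels are relatively compact in the \emph{strong} $L^1$ topology. The only bounds available at this stage are the energy bound $\mF^a[\bar\rho_\tau(t)]\le\mF^a[\rho^0]$ and the second-moment bound; by Dunford--Pettis and Prokhorov these yield weak-$L^1$ and narrow relative compactness, but never strong $L^1$ compactness --- the ``refined Dunford--Pettis plus tightness argument'' you invoke cannot produce it, since equi-integrability and tightness place no constraint on spatial oscillations. The strong compactness needed for Aubin--Lions only becomes available after the flow-interchange estimate supplies the uniform $L^2(0,T;H^1)$ bound on $\bar\rho_\tau^{m/2}$, and that is precisely where the paper does invoke Theorem \ref{ex_AL} (in Proposition \ref{conv}, with the Fr\'echet--Kolmogorov theorem giving compactness of the sublevels in $L^m$). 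For the present lemma, where only narrow convergence is claimed, the correct and much lighter tool is the refined Ascoli--Arzel\`a theorem \cite[Proposition 3.3.1]{AGS}: the uniform second-moment bound gives pointwise tightness, hence relative compactness in the narrow topology, and the approximate equi-H\"older estimate gives the required equicontinuity, so a subsequence converges narrowly for \emph{every} $t$ and no a.e.-in-$t$/diagonal bookkeeping is needed. Replacing your Aubin--Lions step by this, the rest of your argument goes through unchanged.
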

\begin{proof}
Directly from the definition of the minimising sequence we get, 
\begin{equation}
 \frac{1}{2\tau}\sum_{n=1}^N W_2^2\left(\rho_\tau^{n-1},\rho_\tau^n\right)\leq \mF^a\left[\rho_\tau^0\right]-\mF^a\left[\rho_\tau^N\right],
\end{equation}
which easily induces a \emph{monotonicity} property for the functional along the sequence,
$$
\mF^a\left[\rho_\tau^n\right]\leq\mF^a\left[\rho_\tau^0\right], \quad \forall n\geq0\,.
$$
Moreover,  since $\mF^a$ is non-negative we  have that
\begin{equation}
    \sum_{n=1}^\infty W_2^2\left(\rho_\tau^{n-1},\rho_\tau^n\right) \leq 2\tau \mF^a\left[\rho_\tau^0\right].
\end{equation}
Reasoning as in the proof of \cite[Theorem 11.1.6, Steps 1-2]{AGS} 
we get
\begin{equation}\label{eq:hc}
    W_2(\bar{\rho}_\tau(s),\bar{\rho}_\tau(t))\leq \sqrt{2\mF^a[\rho_\tau^0]}\max (\tau,|t-s|)^{\frac12}, \quad s,t\geq 0\,.
\end{equation}
By the refined version of Ascoli-Arzel\`a Theorem in \cite[Proposition 3.3.1]{AGS} we get the narrow convergence.
\end{proof}

We now show that the piece-wise constant interpolation sequence actually is strongly convergent in some $L^p$ space, where the exponent will depend only on the growth condition on $\varphi$ in assumption (D).
\begin{rem}
There exists a constant $C:= C(\rho^0, \varphi, a, V)$, such that
\begin{equation}\label{stima-m2}
 m_2\left[\bar{\rho}_\tau\right](T):=\int_{\R}|x|^2\bar{\rho}_\tau(T,y)dy\leq C(1+T) \mbox{ for all }T\geq 0\,.
 \end{equation}
indeed, using the classical inequality $y^2\leq 2z^2+2|y-z|^2$, calling $\gamma$ an optimal transport plan between ${\rho}^n_\tau$ and $\rho_0$

 \begin{align*}
     \int_{\R^2} y^2 d\gamma(y,z)= m_2\left[{\rho}^n_\tau\right]&\le 2\int_{\R^2} z^2 d\gamma(y,z)+2\int_{\R^2} |y-z|^2 d\gamma(y,z) \\
     &=2m_2\left[\rho_0\right] + W_2^2(\rho_\tau^n,\rho^{0})\le  2\, m_2\left[\rho_0\right] + \sum_{h=0}^n W_2^2(\rho_\tau^k,\rho^{k-1})\\
&\le 2\, m_2\left[\rho_0\right] + 2 \tau  \mF^a[\rho^0]\,.
 \end{align*}
\end{rem}

We are now going to apply Lemma \ref{Flow_interchange} with the entropy
\[
 H\left[\eta \right]=\int_\R\eta(y)\log\eta(y)\,dy,
\]
as auxiliary functional. It is well-known that $H$ posses the \emph{heat flow} as $0-$flow, that is, $G_H^s\rho_0$ is a solution of the heat equation
\[\eta_s=\eta_{yy}, \quad \eta(0,y)=\rho_0(y).\]

\begin{lem} 
There exists a constant  $A$ depending only on $\rho_0$ such that the piece-wise interpolants  $\bar{\rho}_{\tau}$  satisfy 
\begin{equation}\label{stimaH1}
\Vert \bar{\rho}_\tau^{m/2}\Vert_{L^2(0,T; H^1(\R))}\le A(1 +T),
\end{equation}
for all $T>0$. In particular, $\bar{\rho}_\tau^{m/2}\in H^1(\R)$ for every $t>0$.
\end{lem}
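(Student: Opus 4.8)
The plan is to apply the flow interchange Lemma~\ref{Flow_interchange} with the Boltzmann entropy $H[\eta]=\int_\R\eta\log\eta\dy$ as auxiliary functional $\Psi$, whose $0$-flow is the heat semigroup $G_H^s$. The heart of the matter is to identify a functional $\mathcal{R}$ that lower-bounds $\liminf_{s\downarrow0}\bigl(-\tfrac{d}{d\sigma}|_{\sigma=s}\mF^a[G_H^\sigma\rho_\tau^n]\bigr)$ and which controls the Fisher-type quantity $\Vert\partial_y(\rho_\tau^n)^{m/2}\Vert_{L^2}^2$ from below. All the differential manipulations below are rigorous for $s>0$, where parabolic regularity makes $\eta=G_H^s\rho_\tau^n$ smooth and positive, and one then passes to the limit $s\downarrow0$.

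First I would differentiate $\mF^a$ along the heat flow. Writing $\eta=G_H^s\rho_\tau^n$, so that $\eta_s=\eta_{yy}$, and integrating by parts,
\[
-\frac{d}{ds}\mF^a[\eta]=\int_\R\varphi''(a\eta)\,a\,\eta_y^2\dy+\int_\R\varphi''(a\eta)\,a'\eta\,\eta_y\dy+\int_\R V'(y)\,\eta_y\dy .
\]
By the lower bound in (D) and $a\ge1$ (Proposition~\ref{aProperties}(i)) the first (good) term is bounded below by $c_m\tfrac{4}{m^2}\Vert\partial_y\eta^{m/2}\Vert_{L^2}^2$, since $\varphi''(a\eta)a\eta_y^2\ge c_m a^{m-1}\eta^{m-2}\eta_y^2\ge c_m\eta^{m-2}\eta_y^2$ and $(\partial_y\eta^{m/2})^2=\tfrac{m^2}{4}\eta^{m-2}\eta_y^2$.

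Next I would handle the remaining two terms. For the cross term a Young inequality splits off a small multiple $\tfrac{\delta}{2}\int\varphi''(a\eta)a\eta_y^2$, to be absorbed into the good term, leaving a remainder controlled via Proposition~\ref{aProperties}(ii) and the upper bound in (D) by $C_\delta\int_\R a^{\mu-1}\eta^\mu\dy$; the potential term $\int V'\eta_y$ is estimated using (gW1)--(gW2) (see the Remark), contributing at most a constant multiple of the mass and of the second moment. The decisive estimate is then to bound $\int a^{\mu-1}\eta^\mu$: interpolating $\int\eta^\mu$ between the $L^m$ norm---which is bounded uniformly along the JKO sequence, since the energy monotonicity $\mF^a[\rho_\tau^n]\le\mF^a[\rho^0]$ together with the lower bound in (D) yields $\int(\rho_\tau^n)^m\le C$---and $\Vert\partial_y\eta^{m/2}\Vert_{L^2}^2$, a (weighted) Gagliardo--Nirenberg inequality makes the gradient appear to the power $(\mu-m)/m$, which is strictly less than $2$ precisely under the hypothesis $\mu<3m$ of (D). A further Young inequality then absorbs this contribution into the good term. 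This is the step I expect to be the \emph{main obstacle}: controlling the growing weight $a^{\mu-1}$ against the good term, which carries only the weight $a^{m-1}$, forces one to use the sharp admissible range of $\mu$ together with the precise bounds (i)--(iii) on $a$.

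The outcome is a lower bound $\mathcal{R}[\rho_\tau^n]\ge c\Vert\partial_y(\rho_\tau^n)^{m/2}\Vert_{L^2}^2-C\bigl(1+m_2[\rho_\tau^n]\bigr)$ with $c>0$. Inserting it into the telescoped flow-interchange estimate \eqref{FI_2} (with $k=0$) gives
\[
c\,\tau\sum_{n=1}^N\Vert\partial_y(\rho_\tau^n)^{m/2}\Vert_{L^2}^2\le H[\rho^0]-H[\rho_\tau^N]+C\,\tau\sum_{n=1}^N\bigl(1+m_2[\rho_\tau^n]\bigr).
\]
Finally I would bound the right-hand side: the negative part of the entropy is controlled by the second moment, $-H[\rho_\tau^N]\le\tfrac12 m_2[\rho_\tau^N]+C$, and the moment bound \eqref{stima-m2} gives $m_2[\rho_\tau^n]\le C(1+n\tau)$, so the whole right-hand side is $O\bigl((1+T)^2\bigr)$. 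Since $\tau\sum_{n=1}^N\Vert\partial_y(\rho_\tau^n)^{m/2}\Vert_{L^2}^2=\int_0^{N\tau}\Vert\partial_y\bar\rho_\tau^{m/2}\Vert_{L^2}^2\,dt$ by the definition of the piecewise-constant interpolant, this controls the homogeneous part of the norm. Adding the zeroth-order part, $\int_0^T\Vert\bar\rho_\tau^{m/2}\Vert_{L^2}^2\,dt=\int_0^T\!\!\int_\R\bar\rho_\tau^m\le CT$ again by the uniform $L^m$ bound, yields $\Vert\bar\rho_\tau^{m/2}\Vert_{L^2(0,T;H^1(\R))}^2\le A^2(1+T)^2$, i.e. the claim, with $A$ depending only on $\rho_0$ (through $\mF^a[\rho^0]$, $H[\rho^0]$ and $m_2[\rho^0]$) and the fixed data $\varphi,a,V$.
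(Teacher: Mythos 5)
Your overall strategy --- flow interchange with the Boltzmann entropy along the heat flow, followed by the telescoped estimate \eqref{FI_2} and the entropy/second-moment bounds --- is exactly the paper's, and your final assembly of the estimate is fine. The proof diverges, and breaks down, at the treatment of the cross term $\int_\R\varphi''(a\eta)\,a'\eta\,\eta_y\dy$. You apply Young's inequality and then the \emph{upper} bound in (D), which produces a remainder of the form $C_\delta\int_\R a^{\mu-1}\eta^{\mu}\dy$, to be absorbed by a ``weighted'' Gagliardo--Nirenberg interpolation using $\mu<3m$. This step does not close. The only quantities at your disposal are $\int_\R a^{m-1}\eta^m\dy\le C\,\mF^a[\rho^0]$ (energy monotonicity plus the lower bound in (D)) and the Fisher-type term, which carries at most the weight $a^{m-1}$. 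Any interpolation of $\int a^{\mu-1}\eta^\mu$ between these two leaves an uncancelled factor: for instance, writing $a^{\mu-1}\eta^\mu=\bigl(a^{3(m-1)}\eta^{3m}\bigr)^{\theta}\bigl(a^{m-1}\eta^m\bigr)^{1-\theta}a^{(\mu-m)/m}$ with $\theta=(\mu-m)/(2m)$, the leftover weight $a^{(\mu-m)/m}$ is unbounded on $\R$, since $a(y)=1/g(\alpha^{-1}(y))\to+\infty$ as $|y|\to\infty$. An $L^\infty$ bound on $a\eta$ would repair this, but no such bound is available for the JKO iterates at this stage. So the step you yourself flag as ``the main obstacle'' is a genuine obstruction, not a technical nuisance, and the hypothesis $\mu<3m$ cannot rescue it.

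The paper avoids the exponent $\mu$ altogether: after inserting the lower bound $\varphi''(s)\ge c_m s^{m-2}$, the cross term is an exact derivative,
\[
-c_m\int_\R a^{m-2}a'\eta^{m-1}\eta_y\dy=-\frac{c_m}{m}\int_\R a^{m-2}a'\,\partial_y(\eta^m)\dy=\frac{c_m}{m}\int_\R\partial_y\bigl(a^{m-2}a'\bigr)\eta^m\dy,
\]
and Proposition \ref{aProperties}(ii)--(iii) give $\partial_y(a^{m-2}a')=a^{m-1}\bigl((m-2)(a'/a)^2+a''/a\bigr)\le K a^{m-1}$, so the remainder is $\frac{c_mK}{m}\int_\R a^{m-1}\eta^m\dy\le C\,\mF^a[\rho^0]$ --- the same exponent $m$ that the energy already controls, with no interpolation needed. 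In particular $\mu<3m$ plays no role in this lemma; it is used only later, in Proposition \ref{conv}, to upgrade the convergence to $L^\mu$. If you replace your Young-inequality step by this integration by parts, the rest of your argument (the telescoping, the bound $-H[\rho_\tau^N]\le C(1+m_2[\rho_\tau^N])^{1/2}$, the moment estimate \eqref{stima-m2}, and the zeroth-order $L^m$ bound) goes through essentially as you wrote it.
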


\begin{proof}
Let us compute
 \begin{align*}
 \frac{d}{ds}\mF^a\left[G_H^s\rho_0\right] &= \int_\R \left(\varphi^\prime(a \eta) + V(y)\right) \eta_s\,dy\\
 &=  \int_\R \varphi^\prime(a \eta)\eta_{yy}\,dy + \int_\R V(y) \eta_{yy}\,dy\\
 &=-  \int_\R \varphi^{\prime\prime}(a \eta) \partial_y(a\eta)\eta_y\,dy 
 + \int_\R V^{\prime\prime}(y) \eta\,dy\,.\\
  \end{align*}
 Thanks to assumption (D), 
  \begin{align*}
  \frac{d}{ds}\mF^a\left[G_H^s\rho_0\right] 
& \le -c_m\, \int_\R (a\eta)^{m-2} (a^\prime \eta + a \eta_y)\eta_y\,dy   + \int_\R V^{\prime\prime}(y) \eta\,dy\\
&= -c_m \int_\R\left( a^{m-2} a^\prime\eta^{m-1}+ a^{m-1}\eta^{m-2}\partial_{y}\eta\right)\partial_{y}\eta
+ \int_\R V^{\prime\prime}(y) \eta\,dy\\
&= -c_m  \frac{1}{m} \int_\R a^{m-2} a^\prime\partial_y \eta^{m}\,dy -  c_m \frac{4}{m^2} \int_\R a^{m-1}\left(\partial_{y}\eta^{m/2}\right)^2\,dy + \int_\R V^{\prime\prime}(y) \eta\,dy\\
&= c_m \frac{1}{m}\int_\R\partial_y \left(a^{m-2} a'\right) \eta^{m}\,dy-  c_m \frac{4}{m^2} \int_\R a^{m-1}\left(\partial_{y}\eta^{m/2}\right)^2\,dy + \int_\R V^{\prime\prime}(y) \eta\,dy\\
 \end{align*}
Note that by (ii) and (iii) in Proposition \ref{aProperties}
\begin{align*}\label{Stime-a}
\partial_y \left(a^{m-2} a'\right)= a^{m-1} \left((m-2)  \Bigl(\frac{a'}{a}\Bigr)^2 +  \frac{a''}{a}\right)\,\leq a^{m-1}K,
 \end{align*}
 therefore,
\begin{align*}
 \frac{d}{ds}\mF^a\left[G_H^s\rho\right] &\le 
c \int_\R  a^{m-1}  \eta^{m}(s,y)\,dy-\frac{4}{m}\int_\R \left(\partial_{y}\eta^{\frac{m}{2}}(s,y)\right)^2 \,dy
 +\int_\R V''(y)\eta(s,y)\,dy\,.\\
 \end{align*}
 We define
 $$
 \mathcal{R}[\eta] = - c \int_\R  a^{m-1}  \eta^{m}(s,y)\,dy + \frac{4}{m}\int_\R \left(\partial_{y}\eta^{\frac{m}{2}}(s,y)\right)^2 \,dy - \bar{V}
 $$
 where $\bar{V}=\sup_\R V^{\prime\prime} <+\infty$.
 By Lemma \ref{Flow_interchange}, we have that
 $$
  H\left[\rho_\tau^N \right]\leq H\left[\rho^0 \right]- \tau\sum_{n=1}^N  \mathcal{R}\left[\rho_\tau^n\right]\,;
 $$
 hence,
 \begin{eqnarray*}
 && \tau\sum_{n=1}^N \frac{4}{m}\int_\R \left(\partial_{y}(\rho_\tau^n)^{\frac{m}{2}}(s,y)\right)^2 \,dy \le H\left[\rho^0 \right] - H\left[\rho_\tau^N \right]  + c  \tau\sum_{n=1}^N  \int_\R  a^{m-1}  (\rho_\tau^n)^{m}(s,y)\,dy +  \bar{V} N\tau. 
 \end{eqnarray*}
 In particular,
 \begin{eqnarray}\label{Hstima}
 \nonumber \tau \frac{4}{m}\sum_{n=1}^N \Vert(\rho_\tau^n)^{\frac{m}{2}}\Vert^2_{H^1} &\le&
  \tau \frac{4}{m}\sum_{n=1}^N \int_\R \left(\partial_{y}(\rho_\tau^n)^{\frac{m}{2}}\right)^2 \,dy + \tau\frac{4}{m} \sum_{n=1}^N \int_\R a^{m-1}( \rho_\tau^n)^m \,dy
   \\
 &\le&
 H\left[\rho^0 \right] - H\left[\rho_\tau^N \right]  + (c+  \frac{4}{m}) \tau\sum_{n=1}^N  \int_\R  a^{m-1}  (\rho_\tau^n)^{m}\,dy +  \bar{V} N\tau 
 \end{eqnarray}
Recalling the standard inequalities
 $$
 -\frac{2}{e} s^{1/2} \le s\log s\le \frac{1}{(m-1)e} s^m
 $$
 for all $s>0$, we have that
 $$
 H(\eta)=\int_\R \eta\log \eta \,dy \le \frac{1}{(m-1)e} \int_\R \eta^m\,dy \le \frac{1}{(m-1)e} \int_\R a^{m-1} \eta^m\,dy \le
 \frac{1}{e} \mF^a (\eta)\,,
 $$
 and, on the other hand,
$$
 H(\eta)\ge -\frac{2\sqrt{\pi}}{e} \Bigl( 1 + \int_{\R}x^2 \eta\,dy\Bigr)^{1/2},
 $$
 (see e.g.  \cite{DFMatthes} Lemma 4.6). 
 By (\ref{Hstima}) we have that
 \begin{eqnarray}\label{Hstima2}
 \nonumber && \tau \frac{4}{m}\sum_{n=1}^N \Vert(\rho_\tau^n)^{\frac{m}{2}}\Vert^2_{H^1} \\
 &\le&
 \frac{1}{e} \mF^a(\rho^0) + \bar{c}  \Bigl( 1 + \int_{\R}x^2 \bar{\rho}_{\tau}(y, N\tau)\,dy\Bigr)^{1/2}
 + (c+  \frac{4}{m}) N\tau  \mF^a (\rho_0) +  \bar{V} N\tau \,.
 \end{eqnarray}
 By \eqref{stima-m2} we get the thesis.
 
\end{proof}


\begin{prop}\label{conv}
The converging sub-sequence $\bar{\rho}_\tau$ in Lemma \ref{convergence} converges to a limit $\rho$ in $L^\mu([0,T]\times \R)$ for every $T>0$, with $\mu < 3m$.
\end{prop}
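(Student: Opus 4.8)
### Proof Plan for Proposition~\ref{conv}

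The plan is to apply the Extended Aubin--Lions Lemma (Theorem~\ref{ex_AL}) to the family $U = \{\bar\rho_\tau\}_{\tau>0}$ to extract a subsequence converging in measure on $[0,T]\times\R$, and then to upgrade this to strong $L^\mu$ convergence for $\mu<3m$ by a uniform integrability argument. The two hypotheses of Theorem~\ref{ex_AL} must be verified for a suitable choice of normal coercive integrand $\mathcal{Y}$ and pseudo-distance $\mathrm{d}$ on the Banach space $X = L^1(\R)$.

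First I would choose the integrand $\mathcal{Y}[\eta] = \Vert \eta^{m/2}\Vert_{H^1(\R)}^2 + m_2[\eta]$ (or a closely related quantity controlling both the $H^1$-norm of $\eta^{m/2}$ and the second moment). The coercivity and lower semicontinuity of $\mathcal{Y}$, together with the fact that its sublevels are relatively compact in $L^1(\R)$, follow from Rellich-type compactness combined with the second-moment control that prevents mass escaping to infinity. The first condition in \eqref{eq:al}, namely $\sup_\tau \int_0^T \mathcal{Y}[\bar\rho_\tau(t)]\,dt < \infty$, is then precisely the content of the previous lemma: the bound \eqref{stimaH1} controls $\int_0^T \Vert \bar\rho_\tau^{m/2}\Vert_{H^1}^2\,dt$ uniformly in $\tau$, and \eqref{stima-m2} controls the second moment uniformly in $\tau$. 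For the pseudo-distance I would take $\mathrm{d} = W_2$; the second condition in \eqref{eq:al} is then a consequence of the Hölder estimate \eqref{eq:hc}, which gives $\int_0^{T-h} W_2^2(\bar\rho_\tau(t+h),\bar\rho_\tau(t))\,dt \le C\,\max(\tau,h)$, so the double limit $\lim_{h\downarrow0}\sup_\tau$ vanishes. Theorem~\ref{ex_AL} then yields a subsequence converging in measure to some limit $\rho$, which I would identify with the narrow limit curve from Lemma~\ref{convergence}.

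Having convergence in measure, the remaining step is to promote it to strong convergence in $L^\mu$ for $\mu<3m$. The key is a uniform bound on $\bar\rho_\tau$ in $L^{\mu}([0,T]\times\R)$ for some exponent strictly larger than the target, which gives equi-integrability of $|\bar\rho_\tau|^\mu$ and hence, via Vitali's convergence theorem, strong $L^\mu$ convergence. The natural source of such a bound is a Gagliardo--Nirenberg interpolation: the spatial bound $\bar\rho_\tau^{m/2}\in H^1(\R)$ interpolated against the conserved mass $\Vert\bar\rho_\tau\Vert_{L^1}=1$ yields control of a space-time $L^q$ norm, and the exponent $3m$ emerges as the critical Gagliardo--Nirenberg exponent in one dimension, consistent with the range $\mu\in[m,3m)$ appearing in assumption~(D) and in Definition~\ref{def:weak_solutions}(i).

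I expect the main obstacle to be the verification of the \emph{relative compactness of the sublevels of $\mathcal{Y}$ in $L^1(\R)$}, since $\R$ is unbounded and the $H^1$-control is on the nonlinear quantity $\eta^{m/2}$ rather than on $\eta$ itself. This requires combining a local Rellich--Kondrachov argument on balls (to rule out oscillation) with the uniform second-moment bound \eqref{stima-m2} (to rule out mass escaping to infinity), and then transferring the $H^1$-compactness of $\eta^{m/2}$ back to $L^1$-compactness of $\eta$ through the continuity of the map $v\mapsto v^{2/m}$ on bounded sets. Care is also needed to check that the limit obtained from convergence in measure coincides with the narrow limit $\rho$ of Lemma~\ref{convergence} so that the two compactness statements are consistent.
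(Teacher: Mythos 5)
Your proposal follows essentially the same route as the paper: the paper applies the Extended Aubin--Lions Lemma with $X=L^m(\R)$, $\mathcal{Y}[\rho]=\int_\R[(\rho^{m/2})_y]^2\,dy+m_2[\rho]$ and $\mathrm{d}=W_2^2$ (relative compactness of sublevels via Fr\'echet--Kolmogorov, the two conditions in \eqref{eq:al} from \eqref{stimaH1}, \eqref{stima-m2} and \eqref{eq:hc}), obtains strong convergence in $L^m([0,T]\times\R)$, and then upgrades to $L^\mu$ with $\mu<3m$ by exactly the Gagliardo--Nirenberg interpolation you describe, applied directly to the difference $\bar\rho_\tau^{m/2}-\rho^{m/2}$. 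The only cosmetic deviations are your choice of ambient space $L^1(\R)$ instead of $L^m(\R)$ and your use of uniform higher integrability plus Vitali in place of estimating the difference directly; both variants work.
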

\begin{proof}
We first prove the convergence in $L^m([0,T]\times \R)$. The proof is a standard application of Theorem \ref{ex_AL} and we sketch here for completeness, see also \cite[Proposition 4.8]{DFMatthes} . The  strategy is to check that the hypotehsis of Theorem \ref{ex_AL} are satisfied with   $X=L^m(\R)$ and  
\begin{equation*}
\mathcal{Y}[\rho] = \begin{cases}\int_\R \left[(\rho^{\frac{m}{2}})_y\right]^2\dy+m_2\left[\rho\right], & \rho\in \mP_2(\R),\,(\rho^{\frac{m}{2}})_y\in L^2(\R),\\
+\infty & \mbox{ otherwise,} 
\end{cases}
\end{equation*}
and
\begin{equation*}
\mathrm{d}(\rho,\eta) = \begin{cases} W_2^2(\rho,\eta) & \rho,\eta \in \mP_2(\R),\\
    +\infty & \mbox{ otherwise.} 
    \end{cases}
\end{equation*}
By Frechet-Kolmogorov Theorem (see e.g. \cite[Theorem IV.8.20]{DS88}), it can be shown that the sub-levels of $\mathcal{Y}$, $ \mathcal{Y}_c = \left\{\rho \in L^m(\R) | \mathcal{Y}[\rho]\leq c\right\}$ for  $c>0$, are relatively compact in $L^m(\R)$. The bounds in \eqref{stima-m2} and \eqref{stimaH1} imply the first condition in \eqref{eq:al}, that is
\[ 
\sup_{u\in U}\int_0^T\mathcal{Y}[u(t))] dt<\infty
\]
where $U=\left\{\bar{\rho}_{\tau_k}| k\in \mathbb{N}\right\}$. The second condition in \eqref{eq:al} is a direct consequence of the Holder continuity \eqref{eq:hc}. The hypothesis of Theorem \ref{ex_AL} are then satisfied and we can extract a sub-sequence $\bar{\rho}_{\tau_k'}$ converging in measure with respect to $t\in \left[0,T\right]$ to some limit $\rho^{*}$ in $L^m(\R)$. By Lemma \ref{convergence} $\rho^{*}$ coincides with the narrow limit $\rho$ for every $t\in \left[0,T\right]$ and so the entire sequence $\bar{\rho}_{\tau_k}$ converges in measure to $\rho$. By the uniform in $\tau$ bound on the $L^m -$norm of $\bar{\rho}_{\tau}$ and Lebesgue’s dominated convergence theorem we can argue strong convergence of $\bar{\rho}_{\tau}$ to $\rho$ in $L^m(0,T;L^m(\R))$.

Notice that, for every $T>0$ 
\begin{equation*}
    \int_0^T \|\bar{\rho}_\tau(t,\cdot)-\rho(t,\cdot)\|_{L^m(\R)}^\sigma dt \rightarrow 0,
\end{equation*}
as $\tau \to 0$, for every $\sigma>0$. Applying Gagliardo-Nirenberg inequality
$$
 \|f\|_{L^p}\leq C\|\partial_y f\|_{L^r}^\theta \|f\|_{L^q}^{1-\theta}, \quad \mbox{ whith } p=\frac{2\mu}{m},\,q=r=2,\, \theta=\frac{\mu-m}{2\mu},\,
$$
with $\mu> m$, we get
\begin{align*}
     \int_0^T\|\bar{\rho}_\tau^{\frac{m}{2}}-\rho^{\frac{m}{2}}\|_{L^p}^p dt &\leq  C\int_0^T\|\partial_y\left(\bar{\rho}_\tau^{\frac{m}{2}}-\rho^{\frac{m}{2}}\right)\|_{L^2}^{p\theta}\|\bar{\rho}_\tau^{\frac{m}{2}}-\rho^{\frac{m}{2}}\|_{L^2}^{p(1-\theta)} dt\\
     & \leq C\left(\int_0^T\|\partial_y\left(\bar{\rho}_\tau^{\frac{m}{2}}-\rho^{\frac{m}{2}}\right)\|_{L^2}^2dt\right)^{\frac{p\theta}{2}}\left(\int_0^T\|\bar{\rho}_\tau^{\frac{m}{2}}-\rho^{\frac{m}{2}}\|_{L^2}^{\gamma}dt \right)^{\frac{m-\mu\theta}{m}}.
\end{align*}
The exponent $\gamma$ is given by
$$\gamma = \frac{(1-\theta)2\mu}{m-\mu\theta},$$ 
and is a positive exponent provided $\mu < 3m$.
\end{proof}

\begin{prop}
The approximating sequence $\bar{\rho}_\tau$
converges to a weak solution $\rho$ of \eqref{eq:1} in the sense of Definition \ref{def:weak_solutions}.
\end{prop}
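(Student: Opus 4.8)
The plan is to follow the by-now standard route for minimising-movement schemes: derive the discrete Euler--Lagrange (optimality) condition satisfied by each JKO step $\rho_\tau^n$, assemble these into an approximate weak formulation for the interpolant $\bar\rho_\tau$, and then pass to the limit $\tau\downarrow 0$ using the compactness already obtained in Lemma \ref{convergence} and Proposition \ref{conv}. We argue as in \cite[Section~4]{DFMatthes}.

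First I would establish the first-variation identity. Fix $\xi\in C_c^\infty(\R)$, set $T_\e=\mathrm{id}+\e\xi$, and perturb the minimiser by $\rho_\e:=(T_\e)_\#\rho_\tau^n$. Differentiating $\Phi_\tau(\rho_\e;\rho_\tau^{n-1})$ at $\e=0$ and using minimality of $\rho_\tau^n$ gives, with $\gamma_\tau^n\in\Gamma_0(\rho_\tau^n,\rho_\tau^{n-1})$ an optimal plan,
\[
\frac1\tau\int_{\R^2}(y-z)\,\xi(y)\,d\gamma_\tau^n(y,z) + \int_\R \partial_y\big(\varphi'(a\rho_\tau^n)+V\big)\,\xi\,\rho_\tau^n\,\dy = 0.
\]
The Wasserstein contribution is the classical first-variation formula recalled in Subsection \ref{subsec:AGS}, while the derivative of $\mF^a$ produces the functional derivative $F_\eta^a(y,\rho)+V=\varphi'(a\rho)+V$. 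Here one must verify that the explicit $y$-dependence of $F^a(y,\eta)=\varphi(a(y)\eta)/a(y)$ does not spoil the gradient-flow structure: using $\partial_\eta F^a=\varphi'(a\eta)$, a short computation shows that the explicit derivative $\partial_y F^a$ and the pressure term $\rho\,\partial_\eta F^a-F^a$ recombine exactly into $\partial_y[\varphi'(a\rho)]\,\rho$.

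Next I would test against $\zeta\in C_c^\infty(\R)$ by choosing $\xi=\zeta'$. Writing $\int\zeta\,d(\rho_\tau^n-\rho_\tau^{n-1})=\int_{\R^2}[\zeta(y)-\zeta(z)]\,d\gamma_\tau^n$ and Taylor-expanding $\zeta(y)-\zeta(z)=\zeta'(y)(y-z)+O(|y-z|^2)$, the identity above turns the linear term into the flux term and leaves a remainder bounded by $\tfrac12\|\zeta''\|_\infty W_2^2(\rho_\tau^n,\rho_\tau^{n-1})$. Multiplying by a time-test function $\eta\in C_c^1([0,T))$, summing over $n$, and performing a discrete integration by parts in time yields a weak formulation for $\bar\rho_\tau$ that differs from \eqref{eq:weak_solutions} only by an error controlled by $\sum_n W_2^2(\rho_\tau^n,\rho_\tau^{n-1})\le 2\tau\,\mF^a[\rho^0]=O(\tau)$.

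Finally I would pass to the limit. The discrete time-derivative term converges to $\tfrac{d}{dt}\int\zeta\rho$ by the narrow convergence of $\bar\rho_\tau(t)$ to $\rho(t)$ from Lemma \ref{convergence}, and the Taylor remainder vanishes by the $O(\tau)$ bound above. The genuine difficulty, and the step I expect to be the main obstacle, is the nonlinear flux term $\int\partial_y[\varphi'(a\bar\rho_\tau)]\,\zeta'\,\bar\rho_\tau\,\dy$. Expanding $\partial_y[\varphi'(a\rho)]\,\rho=\varphi''(a\rho)\,a'\rho^2+\tfrac{2}{m}\,a\,\varphi''(a\rho)\,\rho^{2-m/2}\,\partial_y\rho^{m/2}$ and invoking the growth condition (D), this term couples the weak-$L^2$ convergence of $\partial_y\bar\rho_\tau^{m/2}$ (from the uniform bound \eqref{stimaH1}) with the strong $L^\mu$ convergence of $\bar\rho_\tau$ (Proposition \ref{conv}); the strong convergence of the prefactor, together with the $H^1$-bound, is precisely what is needed to identify the weak limit of the product as the claimed flux. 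Once this term is identified, $\rho$ satisfies \eqref{eq:weak_solutions}; property (i) of Definition \ref{def:weak_solutions} follows from Proposition \ref{conv}, while property (ii) follows from \eqref{stimaH1} and weak lower semicontinuity of the $L^2$-norm of the gradient, which identifies $\partial_y\rho^{m/2}\in L^2$ as the weak limit of $\partial_y\bar\rho_\tau^{m/2}$.
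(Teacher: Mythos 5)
Your overall architecture coincides with the paper's: perturb the JKO minimiser by a push-forward along $\mathrm{id}+\ep\zeta_y$, extract the discrete Euler--Lagrange relation, convert the Wasserstein first variation into the discrete time increment via a Taylor expansion of $\zeta$ with remainder controlled by $\sum_n W_2^2(\rho_\tau^n,\rho_\tau^{n-1})=O(\tau)$, telescope in time, and pass to the limit with Lemma \ref{convergence} and Proposition \ref{conv}. The difference --- and the gap --- lies in the form in which you write the flux and in how you pass to the limit in it. You state the optimality condition with the term $\int_\R\partial_y(\varphi'(a\rho_\tau^n))\,\xi\,\rho_\tau^n\,\dy$ already assembled, which presupposes a chain rule for the discrete minimiser that is not available at that stage, and you then propose to identify the limit of $\int\partial_y[\varphi'(a\bar\rho_\tau)]\,\zeta_y\,\bar\rho_\tau\,\dy$ by pairing the weak $L^2$ convergence of $\partial_y\bar\rho_\tau^{m/2}$ with strong convergence of the prefactor $\tfrac2m\,a\,\varphi''(a\bar\rho_\tau)\,\bar\rho_\tau^{2-m/2}$. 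Under (D) that prefactor is of order $a^{\mu-1}\rho^{\mu-m/2}$, so strong $L^2$ convergence of it requires control of $\rho$ in $L^{2\mu-m}$; since the available integrability from Proposition \ref{conv} is only $L^\alpha$ with $\alpha<3m$, your argument breaks down for $\mu\in[2m,3m)$, which is allowed by (D).

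The paper avoids both problems by never differentiating $\rho$ at the discrete level: the first variation of $\mF^a$ is kept in the ``pressure'' (divergence) form
\[
\int_\R \frac{\varphi(a\rho)}{a}\,\zeta_{yy}
+\Bigl(\varphi'(a\rho)\frac{\rho}{a}\bigl(a'\zeta_y-a\zeta_{yy}\bigr)-\varphi(a\rho)\frac{a'}{a^2}\,\zeta_y\Bigr)\dy
+\int_\R V'\rho\,\zeta_y\,\dy\,,
\]
in which all derivatives fall on $\zeta$ and on $a$. Every term there is bounded by $a^{\mu-1}\rho^{\mu}$ times bounded quantities on the (compact) support of $\zeta$, so the limit $\tau\to0$ follows from the strong $L^\mu$ convergence alone, for the full range $\mu<3m$; the weak form \eqref{eq:weak_solutions} with $\partial_y(\varphi'(a\rho))$ is then recovered by integrating by parts only on the limit $\rho$, for which $\partial_y\rho^{m/2}\in L^2$ is known. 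You should also be aware that justifying the differentiation of $\ep\mapsto\mF^a[\rho^\ep]$ is not the ``short computation'' you suggest: because the push-forward moves the weight $a(y)$ as well as the density, the paper Taylor-expands $B(\chi,\eta)=\varphi(\chi\eta)/\chi$ in both arguments and estimates three remainder terms $R_\ep^1,R_\ep^2,R_\ep^3$ using (D) and the $L^\mu$ bound before it can discard them as $o(\ep)$. To repair your proof, derive the Euler--Lagrange identity in the divergence form above and postpone the integration by parts to the limit.
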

\begin{proof}
In order to take a lighter notation let us  denote $\rho_0$ and $\rho$ two consecutive minimisers in \eqref{JKO}. For $\ep>0$ and $\zeta\in C_c^{\infty}(\R)$, define
\[P^\ep(y)=y+\ep\zeta_y(y), \qquad \rho^\ep= P^\ep_{\#}\rho.\]
The minimality of $\rho$ gives
\[0\leq \frac{1}{2\tau}\left(W_2^2(\rho^\ep,\rho_0)-W_2^2(\rho,\rho_0)\right)+\mF^a\left[\rho^\ep\right]-\mF^a\left[\rho\right].\]
Let $T$ be the optimal map pushing $\rho_0$ to $\rho$, then by definition
\begin{align*}
& W_2^2(\rho,\rho_0)=\int_\R|y-T(y)|^2\rho_0(y)dy,\\
& W_2^2(\rho^\ep,\rho_0)\leq \int_\R|y-P^\ep\left( T(y)\right)|^2\rho_0(y)dy.
\end{align*}
Therefore, 
\begin{align}\label{I1}
 \nonumber   \frac{1}{2\tau}\left(W_2^2(\rho^\ep,\rho_0)-W_2^2(\rho,\rho_0)\right) & \leq \frac{1}{2\tau}\int_\R\left(|y-P^\ep\left(T(y)\right)|^2-|y- T(y)|^2\right)\rho_0(y)dy\\
  \nonumber & =\frac{1}{2\tau}\int_\R\left(|y-\left(T(y)+\ep\zeta_y(T(y))\right)|^2-|y- T(y)|^2\right)\rho_0(y)dy\\
 &= - \frac{\ep}{\tau}\int_\R\left(y-T(y)\right)\zeta_y(T(y))\rho_0(y)dy + o(\ep):=I_1.
\end{align}
The term involving the functional can be reformulated as
\begin{align}\label{I2+I3}
 \nonumber  \mF^a\left[\rho^\ep\right]-\mF^a\left[\rho\right]&=\int_\R \left(\frac{\varphi(a(y)\rho^\ep)}{a(y)}+V(y)\rho^\ep-\frac{\varphi(a(y)\rho)}{a(y)}-V(y)\rho\right)dy\\
& =I_2+I_3,
\end{align}
where $I_2$ and $I_3$ are defined by
\begin{equation}\label{I2}
  I_2=\int_\R \left(\frac{\varphi(a(y)\rho^\ep)}{a(y)}-\frac{\varphi(a(y)\rho)}{a(y)}\right)dy=\int_\R \left(\varphi\left(\frac{a(P^\ep(y))\rho}{1+\ep\zeta_{yy}(y)}\right)\frac{1+\ep\zeta_{yy}(y)}{a(P^\ep(y))}-\frac{\varphi(a(y)\rho)}{a(y)}\right)dy,
\end{equation}
and
\begin{align}\label{I3}
 & I_3=\int_\R \left(V(P^\ep(y))-V(y)\right)\rho(y)dy.
\end{align}
respectively. In order to handle the $I_2$ term we introduce the following function
\[
 B(\chi,\eta) = \frac{1}{\chi}\varphi(\chi\eta).
\]
A first order Taylor of $B$ expansion around the point $(\chi,\eta)$ with perturbation $(\chi^\ep,\eta^\ep)$ gives
\[
 B(\chi^\ep,\eta^\ep) = B(\chi,\eta) + \left(\frac{\eta}{\chi}\varphi'(\chi\eta)-\frac{1}{\chi^2}\varphi(\chi\eta)\right)(\chi^\ep-\chi)+\varphi'(\chi\eta)(\eta^\ep-\eta)+ R_\ep(\chi,\eta).
\]
Fix $(\chi,\eta)=(a(y),\rho)$, $(\chi^\ep,\eta^\ep)=(a(P^\ep(y)),\frac{\rho}{1+\ep\zeta_{yy}})$, then 
$I_2$ becomes
\begin{align*}
    & \int_\R\left[\frac{\varphi(a\rho)}{a}+\left(\frac{\rho}{a}\varphi'(a\rho)-\frac{\varphi(a\rho)}{a^2}\right)\left(a\circ P^\ep -a\right)+\varphi'(a\rho)\left(\frac{\ep\zeta_{yy}}{1+\ep\zeta_{yy}}\right)\rho+ R_\ep\right]\left(1+\ep\zeta_{yy}\right)-\frac{\varphi(a\rho)}{a}dy \\
    & =\ep\int_\R\frac{\varphi(a\rho)}{a}\zeta_{yy}+\left(\frac{\rho}{a}\varphi'(a\rho)-\frac{\varphi(a\rho)}{a^2}\right)\frac{a\circ P^\ep -a}{\ep}\left(1+\ep\zeta_{yy}\right)+\rho\varphi'(a\rho)\zeta_{yy}dy+ \int_\R R_\ep\left(1+\ep\zeta_{yy}\right)dy. 
\end{align*}
By dominated convergence theorem we can prove that the last term involving $R_\ep$ is $o(\ep)$. Indeed,
\begin{equation}\label{eq:resto}
    \frac{1}{\ep}\int_\R R_\ep\left(1+\ep\zeta_{yy}\right)dy= \frac{1}{\ep}\int_\R \left(R_\ep^1+R_\ep^2+R_\ep^3\right)\left(1+\ep\zeta_{yy}\right)dy,
\end{equation}
where
\begin{align*}
    & R_\ep^1 = \frac12\left(\frac{\tilde{\rho}^2}{\tilde{a}}\varphi''(\tilde{a}
    \tilde{\rho})- 2\frac{\tilde{\rho}\varphi'(\tilde{a}\tilde{\rho})}{\tilde{a}^2}+2\frac{\varphi(\tilde{a}\tilde{\rho})}{\tilde{a}^3}\right)\left(a\circ P^\ep - a\right)^2,\\
    &  R_\ep^2 = \frac12\tilde{a}\varphi''(\tilde{a}
    \tilde{\rho})\left(\frac{\ep\zeta_{yy}}{1+\ep\zeta_{yy}}\right)^2,\\
    & R_\ep^3 = \tilde{\rho}\varphi''(\tilde{a}
    \tilde{\rho})\left(\frac{\ep\zeta_{yy}}{1+\ep\zeta_{yy}}\right)\left(a\circ P^\ep - a\right),\\
\end{align*}
for some $\tilde{a}$ between $a$ and $a\circ P^\ep$ and $\tilde{\rho}$ between $\rho$ and  $\rho/(1+\ep\zeta_{yy})$. Thanks to the growth control (D) it is easy to see that the remainder goes to zero in view of the $L^\mu$ control of $\rho_\tau^n$.

Summing up all the contributions coming from \eqref{I1},  \eqref{I2+I3}, \eqref{I2},  \eqref{I3} dividing by $\ep$, sending $\ep \to 0$ and performing the same computation with $\ep< 0$ we have that
\begin{align*}
& \frac{1}{\tau}\int_\R\left(y-T(y)\right)\zeta_y(T(y))\rho_0(y)dy\\
&= \int_\R \frac{\varphi(a\rho)}{a}\zeta_{yy} + \left( \varphi'(a\rho)\frac{\rho}{a}\left(a'\zeta_y-a\zeta_{yy}\right)-\varphi(a\rho)\frac{a'}{a^2}\zeta_y\right)dy
+\int_\R V'(y)\rho(y)\zeta_y(y)dy\,.
\end{align*}
 
Note that, by Taylor expansion of $\zeta$ around $T$ we get that
$$
\frac{1}{\tau}\int_\R\left(y-T(y)\right)\zeta_y(T(y))\rho_0(y)dy
= \frac{1}{\tau}\int_\R \zeta(y)\left[\rho_0(y)-\rho(y)\right] dy + O(\tau)\,.
$$
We recall now that $\rho_0$ and $\rho$ are two consecutive minimisers in \eqref{JKO}, so that $\rho_0=\rho_\tau^{n}$ and $\rho=\rho_\tau^{n+1}$, so the equality above reads as
\begin{align*}
&\int_\R \zeta\left[\rho_\tau^{n}-\rho_\tau^{n+1}\right] dy + O(\tau)\\
&=\tau\int_\R V'\rho_\tau^{n+1}\zeta_y +\frac{\varphi(a\rho_\tau^{n+1})}{a}\zeta_{yy} + \left( \varphi'(a\rho_\tau^{n+1})\frac{\rho_\tau^{n+1}}{a}\left(a'\zeta_y-a\zeta_{yy}\right)-\varphi(a\rho_\tau^{n+1})\frac{a'}{a^2}\zeta_y\right)dy.
\end{align*}
Let $0\leq t < s$ be fixed, with
$$ h=\left[\frac{t}{\tau}\right]+1 \,\mbox{ and }\,k=\left[\frac{s}{\tau}\right].$$
Summing the equality above form $h$ to $k$ we get,
\begin{align*}
&\int_\R \zeta\left[\rho_\tau^{h}-\rho_\tau^{k+1}\right] dy + O(\tau)\\
&=\tau\sum_{n=h}^{k}\int_\R V'\rho_\tau^{n+1}\zeta_y +\frac{\varphi(a\rho_\tau^{n+1})}{a}\zeta_{yy} + \left( \varphi'(a\rho_\tau^{n+1})\frac{\rho_\tau^{n+1}}{a}\left(a'\zeta_y-a\zeta_{yy}\right)-\varphi(a\rho_\tau^{n+1})\frac{a'}{a^2}\zeta_y\right)dy,
\end{align*}
that is equivalent to 
\begin{align*}
&\int_\R \zeta\left[\bar{\rho}_\tau(t)-\bar{\rho}_\tau(s)\right] dy + O(\tau)\\
&=\int_{t}^{s}\int_\R V'\bar{\rho}_\tau(\sigma)\zeta_y +\frac{\varphi(a\bar{\rho}_\tau(\sigma))}{a}\zeta_{yy} + \left( \varphi'(a\bar{\rho}_\tau(\sigma))\frac{\bar{\rho}_\tau(\sigma)}{a}\left(a'\zeta_y-a\zeta_{yy}\right)-\varphi(a\bar{\rho}_\tau(\sigma))\frac{a'}{a^2}\zeta_y\right)dy\,d\sigma,
\end{align*}
where $\bar{\rho}_\tau$ is the piece-wise constant interpolation introduced in \eqref{eq:pic_lin}. Thanks to Lemma \ref{convergence} and Proposition \ref{conv}, together with the growth condition (D), sending $\tau \to 0$ we obtain
\begin{align*}
&\int_\R \zeta\left[\rho(t)-\rho(s)\right] dy\\
&=\int_{t}^{s}\int_\R V'\rho(\sigma)\zeta_y +\frac{\varphi(a\rho(\sigma))}{a}\zeta_{yy} + \left( \varphi'(a\rho(\sigma))\frac{\rho(\sigma)}{a}\left(a'\zeta_y-a\zeta_{yy}\right)-\varphi(a\rho(\sigma))\frac{a'}{a^2}\zeta_y\right)dy\,d\sigma.
\end{align*}
Integrating by parts in the second and in the forth  term on the r.h.s. we get 
\begin{align*}
&\int_\R \zeta\left[\rho(t)-\rho(s)\right] =\int_{t}^{s}\int_\R V'\rho(\sigma)\zeta_y + \rho(\sigma)(\varphi'(a\rho(\sigma)))_y\zeta_y dy\,d\sigma,
\end{align*}
dividing by $s - t$ and taking the limit as $s \to t$, we get the definition of weak solution in Definition \ref{def:weak_solutions}.
\end{proof}

\subsection{$\lambda$-convexity and $k$-flow}\label{k-flow}

We want study the convexity of the functional $\mF^a$ under general assumptions as in Section \ref{Assumptions}. In Subsection \ref{SpecialCases}
we will show some explicit examples as the case of heat equation, linear Fokker-Planck equation and Porous medium with degenerate mobility.

\begin{lem}\label{lem:k-flow}
Let $F^a$ and $H$ be defined as in \eqref{eq:F&H}, and let us assume that the matrix

\begin{align*}
  \mathcal{H}(y,\eta) =
  \begin{pmatrix}
    H_{yy}(y,\eta) + V''(y) - k & H_{\eta y}(y,\eta) \\
    H_{\eta y}(y,\eta) & H_{\eta\eta}(y,\eta)
  \end{pmatrix},
\end{align*}
is positive semi-definite in $\R\times\R_{+}$, that is $$H(y,\eta)+V(y)-\frac{k}{2}y^2,$$
is jointly convex on $\R\times\R_{+}$. Then the solution to \eqref{eq:2} is a $k$-flow for the functional $ \mF^a\left[\rho\right]$. 
 \end{lem}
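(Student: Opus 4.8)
The plan is to reduce the statement to a convexity computation in \emph{Lagrangian coordinates}, exploiting the one-dimensional structure: under the pseudo-inverse map $\omega\mapsto Y(t,\omega)$ of \eqref{defY}, the Wasserstein distance \eqref{wass} linearises into the flat $L^2((0,1))$ distance, and the functional $\mF^a$ becomes a classical integral functional whose geodesic convexity can be read off directly from the Hessian $\mathcal{H}$.

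First I would rewrite $\mF^a$ in terms of $Y$. Using the change of variables $y=Y(\omega)$, $\dd y=Y_\omega\dd\omega$, the identity $\rho\circ Y=1/Y_\omega$ from \eqref{Yproperty}, and the definition $H(y,\eta)=\eta F^a(y,1/\eta)$ in \eqref{eq:F&H}, the internal energy term transforms as $\int_\R F^a(y,\rho)\dd y=\int_0^1 H(Y,Y_\omega)\dd\omega$, while the potential term gives $\int_\R V\rho\dd y=\int_0^1 V(Y)\dd\omega$. Hence
\begin{equation*}
\mF^a[\rho]=\int_0^1\big[H(Y,Y_\omega)+V(Y)\big]\dd\omega=:\tilde{\mF}[Y].
\end{equation*}
Recalling that in one dimension $W_2^2(\rho_0,\rho_1)=\int_0^1|Y_0(\omega)-Y_1(\omega)|^2\dd\omega$ and that the constant-speed geodesic joining $\rho_0$ to $\rho_1$ is the measure whose pseudo-inverse is the linear interpolant $Y_t=(1-t)Y_0+tY_1$, the whole geometry of $(\mP_2(\R),W_2)$ is flattened into that of $L^2((0,1))$.

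Next I would establish the $k$-convexity. Subtracting the quadratic term, one has $\tilde{\mF}[Y]-\frac{k}{2}\|Y\|_{L^2}^2=\int_0^1\hat L(Y,Y_\omega)\dd\omega$ with $\hat L(y,p):=H(y,p)+V(y)-\frac{k}{2}y^2$, whose Hessian in $(y,p)$ is exactly $\mathcal{H}(y,p)$. By hypothesis $\mathcal{H}\ge 0$, so $\hat L$ is jointly convex. Along a geodesic $Y_t$ the pair $(Y_t,(Y_t)_\omega)$ is, for each fixed $\omega$, the affine interpolant of $(Y_0,(Y_0)_\omega)$ and $(Y_1,(Y_1)_\omega)$; composing the convex function $\hat L$ with this affine map and integrating in $\omega$ yields
\begin{equation*}
\tilde{\mF}[Y_t]-\frac{k}{2}\|Y_t\|_{L^2}^2\le(1-t)\Big(\tilde{\mF}[Y_0]-\frac{k}{2}\|Y_0\|^2\Big)+t\Big(\tilde{\mF}[Y_1]-\frac{k}{2}\|Y_1\|^2\Big).
\end{equation*}
Using the Hilbert identity $\|Y_t\|^2=(1-t)\|Y_0\|^2+t\|Y_1\|^2-t(1-t)\|Y_0-Y_1\|^2$ and $\|Y_0-Y_1\|^2=W_2^2(\rho_0,\rho_1)$, this rearranges into $\mF^a[\rho_t]\le(1-t)\mF^a[\rho_0]+t\mF^a[\rho_1]-\frac{k}{2}t(1-t)W_2^2(\rho_0,\rho_1)$, which is precisely the statement that $\mF^a$ is $k$-geodesically convex on $(\mP_2(\R),W_2)$.

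Finally I would identify the flow. The evolution \eqref{eq:2}, written in pseudo-inverse variables, is exactly \eqref{eq:pseudo}, i.e.\ the $L^2((0,1))$-gradient flow $\partial_t Y=-\mathrm{grad}_{L^2}\tilde{\mF}[Y]$. Since $W_2(\rho,\tilde\rho)=\|Y-\tilde Y\|_{L^2}$, the Wasserstein E.V.I. \eqref{eq:EVI} defining a $k$-flow becomes the classical Hilbert-space E.V.I. for the $L^2$-gradient flow of the $k$-convex functional $\tilde{\mF}$, which holds by the theory of gradient flows of $\lambda$-convex functionals in Hilbert spaces (equivalently, by the $\lambda$-convexity/$k$-flow equivalence recalled above). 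Pulling the inequality back through the isometry $\rho\leftrightarrow Y$ gives \eqref{eq:EVI}, so the solution to \eqref{eq:2} is a $k$-flow for $\mF^a$. I expect the main obstacle to be the rigorous justification of this Lagrangian reduction rather than the convexity computation itself: one must ensure that $\mF^a$ is finite, that the change of variables is licit for the merely monotone pseudo-inverse $Y$, and, most delicately, that the weak solution built by the JKO scheme in Subsection \ref{sec:JKO} genuinely corresponds to \eqref{eq:pseudo} with enough regularity in $\omega$ to differentiate $\hat L(Y,Y_\omega)$ and to make the pointwise affine-interpolation argument valid. The growth control (D) together with Proposition \ref{aProperties} are what keep $H$ and its derivatives bounded and make these steps go through.
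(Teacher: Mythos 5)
Your core convexity computation is the same one the paper uses: both proofs pass to the pseudo-inverse $Y$, rewrite $\mF^a[\rho]=\int_0^1[H(Y,Y_\omega)+V(Y)]\,d\omega$, identify $W_2$ with the flat $L^2((0,1))$ distance, and observe that positive semi-definiteness of $\mathcal{H}$ is exactly joint convexity of $(y,p)\mapsto H(y,p)+V(y)-\tfrac{k}{2}y^2$, which transfers to $k$-convexity along linear interpolants of pseudo-inverses. Where you diverge is in how the E.V.I.\ is actually obtained for \emph{the solution of the PDE}: you propose to invoke the abstract Hilbert-space theory of gradient flows of $\lambda$-convex functionals and then ``pull back through the isometry,'' whereas the paper verifies the E.V.I.\ by a direct computation of $\frac{d^+}{dt}\int_0^1(Y^N-\tilde Y^N)^2\,d\omega$, integrating by parts in $\omega$ and using the first-order convexity inequality for $H^N+V-\tfrac{k}{2}y^2$ (your interpolation inequality and the paper's first-order inequality are of course equivalent statements of convexity).

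The genuine gap is the one you flag in your last paragraph but do not close, and it is where essentially all of the paper's proof lives. To justify the manipulations (that $Y$ solves \eqref{eq:pseudo} classically, that $\hat L(Y,Y_\omega)$ can be differentiated, that the integration by parts produces no boundary terms, that the E.V.I.\ holds for the actual solution of \eqref{eq:2} rather than for an abstract subdifferential flow on the cone of monotone functions), the paper does not work with the weak JKO solution directly. It truncates and mollifies $F^a$ into $F^N$ with the uniform parabolicity bound $\eta F^N_{\eta\eta}\geq c>0$, solves the resulting Neumann problem on $[-N,N]$, which has a strictly positive classical solution $\rho_N$, proves the E.V.I.\ for $\rho_N$ by the direct computation above, and then spends the bulk of the argument deriving $N$-uniform estimates ($L^m$ and $L^\infty$ bounds on $\rho_N$ and $a\rho_N$ via a Gronwall argument using (D) and (gW2), an $L^2(0,T;H^1)$ bound on $\rho_N^{m/2}$ using Proposition \ref{aProperties}, and an $H^{-1}$ bound on $\partial_t\rho_N^{m/2}$) in order to pass to the limit $N\to\infty$ in the E.V.I.\ via Aubin--Lions. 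Your appeal to ``the theory of gradient flows of $\lambda$-convex functionals in Hilbert spaces'' gives you existence of \emph{some} $k$-flow for $\tilde\mF$, but identifying that flow with the solution of \eqref{eq:2} constructed in Subsection \ref{sec:JKO} is not automatic and is precisely the content you would still have to supply; as written, your proof establishes geodesic $k$-convexity of $\mF^a$ but not the statement of the lemma.
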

\begin{proof}
We adapt the regularisation procedure used in \cite{DFMatthes} to our case. We first consider the following cut-off for the function $F^a$ introduced in \eqref{eq:F&H}
\begin{equation}
 \tilde{F}^N(y,\eta)=\begin{cases}
                                          F^a(y,\eta) & \mbox{ if } |y|\leq N,\\
                                          F^a(N,\eta) & \mbox{ if } y> N,\\
                                          F^a(-N,\eta) & \mbox{ if } y< -N,
                        \end{cases}
\end{equation}
and consider $F^N$ be a $C^\infty$ mollification of $\tilde{F}^N(y,\eta)$ such that $F_y^N=0$ for $|y|\geq N+1/2$ and 
\begin{equation}\label{parab}
 \eta F^N_{\eta\eta}\geq c >0.    
\end{equation}
According to \eqref{eq:F&H} we can define $H^N$ and the functional $\mF_N^a$, by replacing $F^a$ with $F^N$. The following initial-boundary value problem
\begin{equation}\label{eq:reg}
\begin{cases}
          \partial_t \rho_N= (\rho_N (\left(F_{y\eta}^N(y,\rho_N)+[\rho_N]_y F^N_{\eta\eta}(y,\rho_N)\right)+V'))_y\\
          \partial_y \rho_N(t,N)=\partial_y \rho_N(t,-N)=0\\
          \rho_N(0,y)=   \rho_{N,0} \,.
\end{cases}
\end{equation}
is uniformly parabolic, thanks to \eqref{parab}, considering an initial datum $\rho_{N,0} $ such that
$ \rho_{N,0}$ satisfies the following inequality  
\begin{equation}\label{Ineq-InitialC}
\int_\R \frac{1}{a(y)}\varphi(a(y)\rho_{N,0}(y))\dy\le \int_\R \frac{1}{a(y)}\varphi(a(y)\rho_0(y))\dy\,,
\end{equation} 
moreover the solution $\rho_N$ is supported in $\left[-N,N\right]$ and strictly positive. Hence, can define the corresponding cumulative distribution function $R^N$ and its pseudo-inverse $Y^N$ that obeys to \[   
Y_t^N = (H_\eta^N(Y^N, Y_z^N))_z -H_Y^N (Y^N, Y_z^N)-\frac{1}{ Y_z^N} (V(Y^N))_z\,,
\]
indeed first term in the right hand side in \eqref{eq:pseudo} can be rewritten in term of $H^N$ as follows
\begin{align*}
 &  -\frac{1}{ Y_z^N} (F_\eta^N(Y^N,\frac{1}{\partial_z Y^N}))_z= - (F_\eta^N(y,\rho_N))_y\\
 & =\frac{1}{\rho_N}	(H_\eta^N(y,\frac{1}{\rho_N}))_y- H_y^N (y,\frac{1}{\rho_N})	\\
 & = Y_z^N (H_\eta^N(Y^N, Y_z^N))_y-H_y^N (Y^N, Y_z^N).
 \end{align*}
We now prove that the solution to \eqref{eq:reg} is a $k$-flow, showing that the E.V.I. \eqref{eq:EVI} is satisfied. Changing variable $\mF_N^a$ we get
\begin{align*}
 & \mF_N^a\left[\rho_N\right]=\int_{-N}^{N}F^N(y,\rho_N)\dx + \int_{-N}^{N}V(y)\rho_N(y)\dy \\
 & =\int_{0}^{1} Y_z^N(t,z)F^N\left(Y^N(t,z),\frac{1}{ Y_z^N(t,z)}\right)\dz + \int_{0}^{1}V(Y^N(t,z))\dz \\
 & =\int_{0}^{1}H^N\left(Y^N(t,z), Y_z^N(t,z)\right)\dz + \int_{0}^{1}V(Y^N(t,z))\dz.
\end{align*}
Since the Wasserstein distance can be rephrased in terms of pseudo-inverse as
 \[
 W_2^2(\rho_1,\rho_2)=\int_0^1\left(Y_1-Y_2\right)^2dz, 
 \]
 for any $\rho_1$ and $\rho_2$ in $\mP_2(\R)$, we have for fixed $\tilde{\rho}_N$
 \begin{align*}
  & \frac{1}{2}\frac{d^+}{dt}W_2^2( \rho_N(t),\tilde{\rho}_N)+\frac{k}{2}W_2^2(\rho_N(t),\tilde{\rho}_N)= \frac{1}{2}\frac{d^+}{dt}\int_0^1(Y^N-\tilde{Y}^N)^2\dz+\frac{k}{2}\int_0^1(Y^N-\tilde{Y}^N)^2\dz\\
  & =\int_0^1 Y_t^N(Y^N-\tilde{Y}^N)\dz+\frac{k}{2}\int_0^1(Y^N-\tilde{Y}^N)^2\dz\\
  & =\int_0^1( (H_\eta^N(Y^N, Y_z^N))_z -H_Y^N (Y^N, Y_z^N)-\frac{1}{ Y_z^N} (V(Y^N))_z)(Y^N-\tilde{Y}^N)\dz\\
  &+\frac{k}{2}\int_0^1(Y^N-\tilde{Y}^N)^2\dz\,.
\end{align*}  
 Integrating by parts in the first term and using the convexity
 \begin{align*}
 & \int_0^1H_\eta^N(Y^N,Y_z^N)(\tilde{Y}_z^N- Y_z^N)\dz+\int_0^1H_Y^N (Y^N, Y_z^N)(\tilde{Y}^N-Y^N)\dz\\
 &+\int_0^1V_x(Y^N)(\tilde{Y}^N-Y^N)\dz+\frac{k}{2}\int_0^1(Y^N-\tilde{Y}^N)^2\dz\\
 & \leq \mF_N^a\left[\tilde{\rho}^N\right]-\mF_N^a\left[\rho_N\right]\,.
 \end{align*}
In order to conclude the proof we need to pass to the limit $N\to\infty$ the above inequality. We first need to show that the sequence $\rho_N$ converges to a certain limit function solution to \eqref{eq:1}. Let us estimate 

\begin{eqnarray*} 
\frac{1}{m}\frac{d}{dt}\int_{-N}^N a^{m-1}(y)\rho^{m}_N(y,t)\dy &=& -\Bigl(\frac{m-1}{m}\Bigr)\int_{-N}^{N}\left[(a\rho_N)^m\right]_y\frac{1}{a}\left(F_{y\eta}^N(y,\rho_N)+[\rho_N]_y F^N_{\eta\eta}(y,\rho_N)\right)\dy\\
 &&+\frac{m-1}{m}\int_{-N}^{N}(a\rho_N)^m \Bigl[\frac{V^\prime}{a}\Bigr]_y\dy\\
  &= &-\Bigl(\frac{m-1}{m}\Bigr)\int_{-N}^{N}\left[(a\rho_N)^m\right]_y\frac{[a\rho_N]_y}{a}\varphi^{\prime\prime}(a\rho_N) \dy\\
 && +\frac{m-1}{m}\int_{-N}^{N}(a\rho_N)^m \left[\frac{V^\prime}{a}\right]_y\dy\,.\\
 &= & -(m-1) \int_{-N}^{N} \frac{(a\rho_N)^{m-1}}{a} [a\rho_N]^2_y \varphi^{\prime\prime}(a\rho_N) \dy\\
 && +\frac{m-1}{m}\int_{-N}^{N}(a\rho_N)^m \left[\frac{V^\prime}{a}\right]_y\dy\,.
 \end{eqnarray*}

By the growth condition from below in (D) and (gW2) the last equality becomes
\begin{equation}\label{Ineq-InitialC1}
\frac{d}{dt}\int_{-N}^N a^{m-1}\rho_N^{m}\dy\le -\frac{4m(m-1)}{(2m-1)^2}c_m\, \int_{-N}^N\frac{1}{a} \left(\left[(a\rho_N)^{m-\frac{1}{2}}\right]_y\right)^2\dy +L (m-1)\int_{-N}^N a^{m-1}\rho_N^{m}\dy\,.
\end{equation}
By applying the Gronwall's inequality in $(0,T)$  we deduce an $L^m$ estimates on $\rho_N$ that is
\begin{equation}\label{Stima-rhom}
     \int_{-N}^N \rho_N^{m}\dy \leq 
\int_{-N}^N a^{m-1}\rho_N^{m}\dy\le e^{(m-1)L T} \int_{-N}^N a^{m-1}\rho_{N,0}^{m} \dy \le  c\, e^{(m-1)L T}  \mF^a\left[\rho_0\right]\,.
\end{equation}
This actually induce a $L^\infty$ bound in space on both $\rho_N$ and the product $a\rho_N$. The firs estimate can be trivially deduce from \eqref{Stima-rhom}. In order to see the $L^\infty$ bound on $a\rho_N$ consider the change of variable $x=\alpha^{-1}(y)$ that maps $[-N,N]$ to $[-1+\delta_N,1-\delta_N]$ for some $\delta_N>0$. Define the scaling $v_N(x,t)=a(\alpha(x))\rho_N(\alpha(x),t)$ for $x\in[-1+\delta_N,1-\delta_N]$ and zero outside. The above change of variable in \eqref{Ineq-InitialC} reads as
\begin{align*}
 \frac{d}{dt}\int_{-1+\delta_N}^{1-\delta_N} v_N^{m}\dx  &\le L(m-1)\int_{-1+\delta_N}^{1-\delta_N} v_N^{m}\,\dx \,.
\end{align*}
that is 
\begin{equation*}
    \|v_N\|_m \leq e^\frac{L(m-1)t}{m}\|v_0\|_m \mbox{ sending $m\to\infty$ }\|v_N\|_\infty \leq e^{L t}\|v_0\|_\infty.
\end{equation*}

Integrating (\ref{Ineq-InitialC}) with respect to $t\in (0, T)$ we get
\begin{equation}\label{Controllo-norma1}
    \frac{4(m-1)}{(2m-1)^2}c\, \int_0^T\int_{-N}^N\frac{1}{a(y)} \left[\left([a(y)\rho_N(y,t)]^{m-\frac{1}{2}}\right)_y\right]^2 \dy\,dt\leq C(T,m)\mF^a\left[\rho_0\right].
\end{equation}
 Note that
  \begin{align*}
 \left[\left([a(y)\rho_N(y,t)]^{m-\frac{1}{2}}\right)_y\right]^2 &= \left( \frac{2m-1}{2}\right)^2  \left( \frac{a^\prime}{a}\right)^2 a^{2m-1} (\rho_N)^{2m-1} \\
 &\qquad + a^{2m-1} \left([(\rho_N)^{m-\frac{1}{2}}]_y\right)^2 \\
 &\qquad + (2m-1) \left(\frac{a^{\prime}}{a}\right) a^{2m-1} (\rho_N)^{m-\frac{1}{2}}\, [(\rho_N)^{m-\frac{1}{2}}]_y\,;
 \end{align*} 
hence, 
\begin{align*}
 \int_{-N}^N\frac{1}{a(y)} \left(\left[(a(y)\rho_N(y,t))^{m-\frac{1}{2}}\right]_y\right)^2 \dy
&= \left( \frac{2m-1}{2}\right)^2 \int_{-N}^N  \left( \frac{a^\prime}{a}\right)^2 a^{2m-2} (\rho_N)^{2m-1}\dy \\
 &\qquad +\int_{-N}^N a^{2m-2} \left([(\rho_N)^{m-\frac{1}{2}}]_y\right)^2 \dy\\
 &\qquad + (2m-1) \int_{-N}^N \left(\frac{a^{\prime}}{a}\right) a^{2m-2} (\rho_N)^{m-\frac{1}{2}}\, [(\rho_N)^{m-\frac{1}{2}}]_y \dy\\
 &= \left( \frac{2m-1}{2}\right)^2 \int_{-N}^N  \left( \frac{a^\prime}{a}\right)^2 a^{2m-2} (\rho_N)^{2m-1}\dy \\
 &\qquad +\int_{-N}^N a^{2m-2} \left( [(\rho_N)^{m-\frac{1}{2}}]_y\right)^2 \dy\\
 &\qquad + (\frac{2m-1}{2}) \int_{-N}^N \left(\frac{a^{\prime}}{a}\right) a^{2m-2}[(\rho_N)^{2m-1}]_y\dy \,.
 \end{align*}
Since the first term in the right-hand side inequality is positive, $a\ge 1$, and $m>1$ then we can minimise
\begin{eqnarray*}
&&\int_{-N}^N\frac{1}{a(y)} \left(\left[(a(y)\rho_N(y,t))^{m-\frac{1}{2}}\right]_y\right)^2 \dy\\
&\ge& \int_{-N}^N  \left([(\rho_N)^{m-\frac{1}{2}}]_y\right)^2 \dy- (\frac{2m-1}{2})  \int_{-N}^N \left(\frac{a^{\prime\prime}}{a} + (2m-3) \left(\frac{a'}{a}\right)^2 \right) a^{2m-2}  (\rho_N)^{2m-1} \dy\,.
 \end{eqnarray*}
Therefore, by (\ref{Controllo-norma1}) we get that
 \begin{equation*}
  \int_{-N}^N  \left( [(\rho_N)^{m-\frac{1}{2}}]_y\right)^2 \dy \leq C(t,m)\mF^a\left[\rho_0\right] +
   (\frac{2m-1}{2})  \int_{-N}^N \left(\frac{a^{\prime\prime}}{a} + (2m-3) \left(\frac{a^\prime}{a}\right)^2 \right) a^{2m-2} \rho^{2m-1}_N \dy\,.
 \end{equation*}
 
 We recall that, by Proposition \ref{aProperties}, $a^{\prime\prime}/a $ and $|a^{\prime}(y) / a(y)|$ are bounded for every $y\in\R$. Hence, if we denote $K:= \displaystyle{\sup_{\R} \left(\frac{a^{\prime\prime}}{a} + (2m-3) \left(\frac{a^{\prime}}{a}\right)^2 \right)}$
 we can conclude that
  \begin{align*}
  \int_{-N}^N  \left([(\rho_N)^{m-\frac{1}{2}}]_y\right)^2 \dy &\le C(t,m) \mF^a\left[\rho_0\right]+K \int_{-N}^N  a^{2m-2}  \rho_N^{2m-1} \dy\,.
 \end{align*}
 Thanks to the $L^\infty$ control on $a\rho_N$ we can estimate
  \begin{align*}
 \int_{-N}^N  a^{2m-2}  \rho_N^{2m-1} \dy&= \int_{-N}^N  a^{m-1}  \rho^m_N \left(a^{m-1}  \rho^{m-1}_N\right)\dy\\
 &\le C \,\mF^a\left[\rho_N\right]\le C \,\mF^a\left[\rho_0\right] \,.
  \end{align*}
Since $\rho_N>0$ the $L^2$ estimate on $[(\rho_N)^{m-\frac{1}{2}}]_y$ easily implies an $L^2$ estimate 
$[(\rho_N)^{\frac{m}{2}}]_y$ and so the uniform in $N$ control of $\rho_N^{\frac{m}{2}}$ in $L^2\left(\left[0,T\right],H^1(\R)\right)$.

We now prove an $H^{-1}$ estimate on $[\rho_N^{\frac{m}{2}}]_t$. 
Let $\theta$ be a bounded function such that $\theta_{yy} =[\rho_N^{\frac{m}{2}}]_t$. We get that
\begin{align*}
  \int_{-N}^N (\theta_y)^2\dy &= -\int_{-N}^N \theta \theta_{yy} \dy = -\frac{m}{2} \int_{-N}^{N}\theta \rho_N^{{m\over 2 }-1}[\rho_N]_t \dy  \\
  & =-\frac{m}{2}\int_{-N}^{N}\theta \rho_N^{{m\over 2 }-1} \left[\rho_N\left(F_{\eta y}^N(y,\rho_N)+F_{\eta\eta}^N(y,\rho_N)\partial_y \rho_N + V^\prime\right)\right]_y\dy\\
 & = \frac{m}{2}\int_{-N}^{N}[\theta \rho_N^{{m\over 2 }-1}]_y \rho_N\left(F_{\eta y}^N(y,\rho_N)+F_{\eta\eta}^N(y,\rho_N)\partial_y \rho_N + V'\right) \dy \\
 & = \frac{m}{2}\int_{-N}^{N}[\theta \rho_N^{{m\over 2 }-1}]_y \rho_N\left(\left(a^\prime\rho_N+a\, [\rho_N]_y\right)\varphi^{\prime\prime}(a\rho_N) + V^\prime\right) \dy \\
 &= \frac{m}{2}\int_{-N}^{N}\theta_y \rho_N^{{m\over 2 }} \left(\left(a'\rho_N+a\,[\rho_N]_y\right)\varphi^{\prime\prime}(a\rho_N) + V^{\prime}\right) \dy \\
 &+ \frac{m}{2}\int_{-N}^{N}\theta [ \rho_N^{\frac{m}{2}-1}]_y \rho_N\left(\left(a^{\prime}\rho_N+a\,[\rho_N]_y\right)\varphi^{\prime\prime}(a\rho_N) + V^\prime\right) \dy\\
 & =I_1+I_2.
 \end{align*}
Applying the weighted Cauchy inequality to $I_1$ we have that
\begin{equation}
    I_1 \leq \frac12\int_{-N}^N\partial_y \theta ^2\dy + \frac{m^2}{4}3C(V,a)^2\int_{-N}^N\rho_N^m\dy + \frac{m^2}{4}\frac{3}{2} C^2\int_{-N}^N\partial_y(\rho_N^{\frac{m}{2}})^2\dy.
\end{equation}
Concerning $I_2$ its easy to see that we can estimate
\begin{align*}
    |I_2| & \leq \|\theta\|_\infty C(m,\|a\rho_N\|_\infty)\int_{-N}^N\partial_y(\rho_N^{\frac{m}{2}})^2\dy.
\end{align*}
Hence we can conclude that 
\begin{equation}
    \frac12\int_{-N}^N\partial_y \theta ^2\dy \leq C(m,\|a\rho_N\|_\infty,V)\left(\int_{-N}^N\rho_N^m\dy + \int_{-N}^N\partial_y(\rho_N^{\frac{m}{2}})^2\dy\right).
\end{equation}
Thanks to the previous bounds on $\rho_N^{\frac{m}{2}}$ we can conclude that $\partial_t \rho_N^{\frac{m}{2}}$ is $N-$uniformly bounded in $L^2(0,T;H^{-1}(\R))$, thus invoking Aubin-Lions lemma $\rho_N^{\frac{m}{2}}$ converges to a certain limit $\eta$ in $L_{loc}^2(\R_{+}\times\R)$. The estimates above allow us to pass to limit in the weak formulation of the regularised problem in order to recover weak solutions to \eqref{eq:1}. The passage to the limit in the E.V.I. can easily be deduced by the argument in \cite{DFMatthes}.
 \end{proof}
 We are now ready to state and proof the main results of this section.
\begin{thm}\label{thm:rho}
Assume that $g:[-1,1]\to [0,1]$ is under assumptions (g1), (g2) and (g3), $\varphi:[0,+\infty)\to \R$ satisfy assumption (D) and $W:[-1,1]\to\R$ satisfy (gW1) and (gW2), together with $g$. Let $\rho_0\in\mP_2(\R)$, then there exist $\rho:[0,+\infty)\to \mP_2(\R)$ weak solutions to equation \eqref{eq:main_rho_1} in the sense of Definition \ref{def:weak_solutions}. If the additional assumption of Lemma \ref{lem:k-flow} is full-filled for some $k\in\R$, there is at most one solution to \eqref{eq:main_rho_1} with initial condition $\rho_0$.
\end{thm}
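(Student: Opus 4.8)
The existence part requires no new ideas beyond assembling the results of Subsection \ref{sec:JKO}. First I would invoke Lemma \ref{existence_Min} to guarantee that the minimising-movement scheme \eqref{JKO} is well posed, producing for each step size $\tau>0$ a sequence $\{\rho_\tau^n\}$ and its piecewise-constant interpolant $\bar\rho_\tau$. Lemma \ref{convergence} then yields a narrowly convergent (non-relabelled) subsequence with H\"older-continuous limit curve $\rho:[0,+\infty)\to\mP_2(\R)$. The second-moment bound \eqref{stima-m2} and the $H^1$-estimate \eqref{stimaH1} feed Proposition \ref{conv}, upgrading narrow convergence to strong convergence in $L^\mu([0,T]\times\R)$ for every $\mu<3m$. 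Finally, the Proposition closing Subsection \ref{sec:JKO} shows that the limit is a weak solution in the sense of Definition \ref{def:weak_solutions}, by passing to the limit in the first-order optimality (Euler--Lagrange) condition for the JKO minimisers. None of these steps uses the convexity hypothesis of Lemma \ref{lem:k-flow}, so existence holds unconditionally.

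For the uniqueness part I would exploit the gradient-flow structure. Under the additional hypothesis of Lemma \ref{lem:k-flow} the map $(y,\eta)\mapsto H(y,\eta)+V(y)-\tfrac{k}{2}y^2$ is jointly convex, which is exactly the statement that $\mF^a$ is geodesically $\lambda$-convex with $\lambda=k$; by the theorem following Definition \ref{EVI}, $\mF^a$ then admits a uniquely determined $k$-flow $G_{\mF^a}^s$ obeying the E.V.I. \eqref{eq:EVI}, and Lemma \ref{lem:k-flow} identifies the solution of \eqref{eq:2} (equivalently \eqref{eq:main_rho_1}) with this $k$-flow. The contraction then follows by the classical E.V.I. argument: given two solutions $\rho_1,\rho_2$ with common datum $\rho_0$, I would insert $\tilde\rho=\rho_2(s)$ into the E.V.I. for $\rho_1$ and $\tilde\rho=\rho_1(s)$ into the E.V.I. for $\rho_2$, and add. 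The free-energy contributions cancel, while the two difference quotients sum (by symmetry of $W_2$ and absolute continuity of the curves) to $\tfrac{d}{ds}W_2^2(\rho_1(s),\rho_2(s))$, giving
\begin{equation*}
\frac{1}{2}\frac{d}{ds}W_2^2(\rho_1(s),\rho_2(s))+k\,W_2^2(\rho_1(s),\rho_2(s))\le 0.
\end{equation*}
Gronwall's lemma then yields $W_2^2(\rho_1(t),\rho_2(t))\le e^{-2kt}W_2^2(\rho_0,\rho_0)=0$, whence $\rho_1\equiv\rho_2$.

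The genuine difficulty is the identification step. The E.V.I. \eqref{eq:EVI} is a statement about the $k$-flow, and the contraction above requires \emph{both} curves to satisfy it, whereas Definition \ref{def:weak_solutions} only furnishes a distributional identity. I would therefore need to show that every weak solution is in fact an E.V.I. solution. The natural route is the one-dimensional pseudo-inverse reformulation \eqref{eq:pseudo}: in the $L^2(0,1)$ Hilbert structure the convexity of $H(y,\eta)+V(y)-\tfrac{k}{2}y^2$ turns \eqref{eq:pseudo} into the gradient flow of a $\lambda$-convex functional of $Y$, for which the E.V.I. can be read off for any solution enjoying the energy bounds already established, in particular the $L^2(0,T;H^1(\R))$ control of $\rho^{m/2}$ coming from \eqref{stimaH1}. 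Making this rigorous, together with the justification of differentiating $W_2^2$ along the curves and the measurable selection of optimal plans, is where I expect essentially all of the technical weight to lie; the remaining assembly is routine.
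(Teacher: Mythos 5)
Your proposal follows essentially the same route as the paper: existence is obtained by assembling the JKO results of Subsection \ref{sec:JKO} (well-posedness of the scheme, narrow compactness, the moment and $H^1$ bounds, strong $L^\mu$ convergence, and passage to the limit in the Euler--Lagrange condition), and uniqueness by inserting each solution as the reference measure in the other's E.V.I.\ and applying Gronwall, exactly as in the paper's proof of Theorem \ref{thm:rho}. Your closing concern about the identification step --- that an arbitrary weak solution in the sense of Definition \ref{def:weak_solutions} must first be shown to satisfy the E.V.I.\ before the contraction argument applies --- is well placed: the paper simply asserts that ``the E.V.I.\ holds for both the solutions'' and otherwise handles this only through the regularised construction of Lemma \ref{lem:k-flow} (where the E.V.I.\ is established for the approximations and passed to the limit, following \cite{DFMatthes}), so your pseudo-inverse route via \eqref{eq:pseudo} is a legitimate way to fill in what the paper leaves implicit rather than a departure from its argument.
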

\begin{proof}
The existence is proved in Subsection \ref{sec:JKO}. Concerning the uniqueness, let $\rho$ and $\eta$ be two solutions of \eqref{eq:main_rho_1} with initial data $\rho_0$ and $\eta_0$ respectively. Under the assumption of Lemma \ref{lem:k-flow}, the E.V.I. holds for both the solutions, namely for any $\tilde{\rho}\in\mP_2(\R)$
\begin{equation}\label{eq:EVI2}
 \frac{1}{2}\frac{d}{dt}W_2^2( \rho,\tilde{\rho})+\frac{k}{2}W_2^2( \rho,\tilde{\rho})\leq \mF^a(\tilde{\rho})-\mF^a(\rho),
\end{equation}
and
\begin{equation}\label{eq:EVI3}
 \frac{1}{2}\frac{d}{dt}W_2^2( \eta,\tilde{\rho})+\frac{k}{2}W_2^2( \eta,\tilde{\rho})\leq \mF^a(\tilde{\rho})-\mF^a(\eta),
\end{equation}
are satisfied. Choosing $\tilde{\rho}=\eta$ in \eqref{eq:EVI2}, $\tilde{\rho}=\rho$ in \eqref{eq:EVI3} and summing the two inequalities , by Gronwall Lemma we get the contraction property
\begin{equation}
    W_2^2(\rho(t),\eta(t))\leq e^{-kt} W_2^2(\rho_0,\eta_0),
\end{equation}
that yields uniqueness provided $\rho_0=\eta_0$.
\end{proof}

\subsection{Existence and uniqueness result in the original variable}
We are now in the position of proving our main Theorem \ref{main_th}. Fix $T>0$ and consider the initial datum $u_0\in L^1\cap L^m(\Omega)$. Define
$$
\rho_0(y)=g(\alpha^{-1}(y))u_0(\alpha^{-1}(y)), \,y\in\R,
$$
with $\alpha$ as in \eqref{eq:alpha}. The function $\rho_0$ is an admissible initial condition for \eqref{eq:main_rho_1} in the sense of Definition \ref{def:weak_solutions} and by Theorem \ref{thm:rho} there exists a unique solution that we call $\rho$ corresponding to this initial datum, with $\rho\in L^m([0,T]\times \R)$ and $\partial_y(\rho^{\frac{m}{2}})\in L^2([0,+\infty)\times \R)$. Thanks to the uniqueness result we can define in a unique way
$$
 u(x,t)=a(\alpha(x))\rho(\alpha(x),t), \quad x\in\Omega,
$$
by the usual change of variable, we get that
$$
 \int_\Omega u^m(x,t)dx = \int_\R a^{m-1}(y)\rho^m(y,t) dy.
$$
A computation very similar to the one in proof of Lemma \ref{lem:k-flow} shows that
 \begin{align*}
\frac{1}{m} \frac{d}{dt}\int_\Omega u^{m}(x,t)\dx\  & = \frac{1}{m} \frac{d}{dt} \int_\R a^{m-1}(y)\rho^m(y,t) dy\\
& \le -\frac{4(m-1)}{(2m-1)^2}c_m\, \int_\R\frac{1}{a} \left(\left[(a\rho)^{m-\frac{1}{2}}\right]_y\right)^2\dy +L \frac{m-1}{m}\int_\R a^{m-1}\rho^{m}\dy\\
 &\le -\frac{4(m-1)}{(2m-1)^2} c_m\,\int_\Omega g^2 \left(\partial_x u^{m-\frac{1}{2}}\right)^2 \,dx +L\frac{m-1}{m}\int_\Omega u^{m}\,\dx \,.
\end{align*}
Since $u_0\in L^m(\Omega)$ we get that $u\in L^m(\Omega)$. A control on $g\partial_x u^{\frac{m}{2}}$ in $L^2$ can be easily derived from the $L^2$ bound on $\partial_y\rho ^{\frac{m}{1}}$ and assumption (gW2). Changing variable in \eqref{eq:weak_solutions} we get
\begin{align*}
  \frac{d}{dt}\int_\Omega \psi(x) u(x,t) dx & =  \frac{d}{dt}\int_\R\zeta(y) \rho(y,t) dy \\
&  = -\int_\R \partial_y\left(  \varphi'(a\rho)+V(y) \right)\partial_y\zeta(y)\rho(y,t)dy\\
&  = -\int_\Omega g^2(x)\partial_x\left(  \varphi'(u)+W(x) \right)\partial_x\psi(x)u(x,t)dx 
\end{align*}
  where $\zeta(y)=\psi(\alpha^{-1}(y))$, for $y\in\R$.
\subsection{Special cases with degenerate mobility}\label{SpecialCases}
In Section \ref{k-flow} we associated to the equation
$$
 \rho_t=(\rho\left(\varphi'(a\rho)+V\right)_y)_y
$$
the functional $\mF^a\left[\rho\right]$ as in \eqref{mFa}. By using \eqref{defY} and \eqref{Yproperty}, the functional  can be rewritten in the following form 
\begin{align}\label{Fa-psi}
 \nonumber \mF^a\left[\rho\right]
  \nonumber &= \int_{\Omega'} \frac{\varphi(a(y)\rho(y))}{a(y)}\dd y + \int_{\Omega'}V(y)\rho(y)\dd y \\
  \nonumber &= \int_0^1 \frac{\varphi(a\circ Y\,\rho\circ Y)}{a\circ Y}Y_\omega\dd\omega
  \nonumber + \int_0^1V\circ Y\,\rho\circ Y\,Y_\omega\dd\omega\\
  &= \int_0^1 \psi\Big(\frac{a\circ Y}{Y_\omega}\Big)\dd\omega + \int_0^1 V\circ Y\dd\omega =: \tilde\mF^a\left[Y\right],
\end{align}
where $\psi(s)=\varphi(s)/s$.
We recall that $\lambda$-convexity of $\mF^a$ in Wasserstein is 
equivalent
to $\lambda$-convexity of $\tilde\mF^a$ in $L^2$.
The latter is implied by convexity of $f:\Omega'\times\setR_+\to\setR$ with
\begin{align}\label{def-f}
  f(p,q) = \psi\Big(\frac{a(p)}q\Big) + V(p) - \frac\lambda2p^2\,.
\end{align}

The Hessian of $f$ is given by 
\begin{align*}
  \mathcal{H}_f (p,q)=
  \begin{pmatrix}
   (a^\prime)^2 q^{-2}\,\psi^{\prime\prime}(z)+a^{\prime\prime}q^{-1}\,\psi^{\prime}(z) + V^{\prime\prime} - \lambda\,, & -aa^{\prime}q^{-3}\,\psi^{\prime\prime}(z)-a^\prime q^{-2}\,\psi^{\prime}(z) \\ \\
    -a a^\prime q^{-3}\,\psi^{\prime\prime}(z)- a^\prime q^{-2}\,\psi^{\prime}(z)\,, & a^2q^{-4}\,\psi^{\prime\prime}(z) + 2aq^{-3}\,\psi^{\prime}(z)
  \end{pmatrix}.
\end{align*}
where $z:=a(p)/q$ and we have omitted the dependence of $a$ from the variable $p$ to not overburden the notations.
We now study the $\lambda$-convexity in  three relevant cases with $g(x)= (1-x^2)^{p/2}$.

\subsection{Heat equation}
we consider the linear heat equation with a non-trivial mobility, 
  \begin{align*}
  u_t=(g^2u_x)_x = \big(g^2u[\log u]_x\big)_x\,.
  \end{align*}
By Section \ref{sec:scaling} we get the following equation in $\rho(y,t)$
\begin{equation}\label{eq:rho}
\rho_t\ = (\rho[\log (a\rho)]_y)_y = \rho_{yy} + (\rho[\log (a)]_y)_y
\end{equation}
and the corresponding functional as in \eqref{mFa} with 
 $\varphi(\rho)=\rho\log\rho$ and $V(y)= \log a$. Therefore, by \eqref{eq:help2}, \eqref{eq:help3}, and $\psi(z)=\log z$, we have that the hessian reduces to 
\begin{align*}
  \mathcal{H}_f (p,q) =
  \begin{pmatrix}
    -2g g_{xx}-\lambda & 0 \\
    0 & q^{-2}
  \end{pmatrix}.
\end{align*}

  The equation \eqref{eq:rho} is a $\lambda$-convex gradient flow, with
  \begin{align*}
    \lambda:=\inf_{y\in\R} [\log a]_{yy} = \inf_{y\in\R} \frac{a^{\prime\prime} a- (a^\prime)^2}{a^2}\circ\alpha
    = -\sup_{x\in\Omega} (g g^{\prime\prime}),
  \end{align*}
  where we have used \eqref{eq:new_coefficient}.
  Since $g(x)=(1-x^2)^{p/2}$ for some  $p>0$ then we get that
  \begin{align*}
    -g^{\prime\prime}(x)g(x) &= p\big(x(1-x^2)^{p/2-1}\big)_x(1-x^2)^{p/2} \\
    &= p(1-x^2)^{p-2}\big(1-x^2-(p-2)x^2\big) \\
    &= p(1-x^2)^{p-2}(1-(p-1)x^2);
  \end{align*}
 which implies,
  \begin{align*}
    \lambda = p\inf_{|x|<1}(1-x^2)^{p-2}(1-(p-1)x^2). 
  \end{align*}
   for $p>2$.
  
\subsection{Linear Fokker-Plank equation} We now consider a linear Fokker-Planck equation  
with degenerate mobility $g$\ie
\begin{align*}
 u_t = \big(g^2u[\log u+W]_x\big)_x.
  \end{align*}
Recalling \eqref{eq:help2} and \eqref{eq:help3}, the hessian reduces
\begin{align*}
  \mathcal{H}_f (p,q) =
  \begin{pmatrix}
    -g g^{\prime\prime}+g^2W^{\prime\prime}+gg^{\prime} W^{\prime}-\lambda & 0 \\
    0 & q^{-2}
  \end{pmatrix}.
\end{align*}
In this case we can consider a $\lambda$ that balances the diffusive and the potential part, separately; namely, we assume that there exist $\lambda_d$ and $\lambda_W$ such that
\begin{itemize}
    \item[(i)] $- g g^{\prime\prime}\geq \lambda_d$,
    \item[(ii)] $g^2W^{\prime\prime}+g g^\prime W^\prime\geq \lambda_W$.
\end{itemize}

\subsection{Porous Medium}\label{PM}
  We consider the following porous medium equation
  \begin{align*}
  u_t = \big( g^2\,[(u^m)_x+uW^\prime] \big)_x.
  \end{align*}
  Then $\varphi(s)=s^m/(m-1)$, and accordingly
  \begin{align*}
    \psi(z) = \frac{z^{m-1}}{m-1},\quad \psi'(z) = z^{m-2},\quad \psi''(z) = (m-2)z^{m-3}.
  \end{align*}
  The component $(\mathcal{H}_f)_{11}$ is thus given by
  \begin{align}\label{component11}
  (\mathcal{H}_f)_{11}
&= \frac{1} {g^{m-1} q^{m-1}}\Bigl( (m-1)(g^\prime)^2 -g^{\prime\prime} g\Bigr) +
(g^2W^{\prime\prime}  + g g^\prime W^\prime - \lambda ) ,
  \end{align}
  and the determinant becomes
  \begin{align}\label {determinant}
    \det \mathcal{H}_f 
    &= \frac{1}{g^{2(m-1)}q^{2m}} \Bigl( (m-1)(g^\prime)^2 -m \, g^{\prime\prime} g\Bigr) + \frac{m} {g^{m-1} q^{m+1}} (g^2W^{\prime\prime}+gg^\prime W^\prime-\lambda).
  \end{align}
  
   Unfortunately, only in the linear case $m=1$ both the component $11$ and the determinant are homogeneous w.r.t.\ $q$ and $g$,
  and their respective terms can be combined to balance each other.
  As soon as $m\neq1$, all terms need to be non-negative individually.
  In the case $m=2$, $(g^{\prime})^2 -g^{\prime\prime} g = 2(1+x^2)$ that is always positive, so the first entrance in the hessian is positive as soon as 
  $$g^2W^{\prime\prime} + g g^{\prime} W^{\prime} - \lambda \geq 0. $$
  In order to preserve this condition we need to impose that
  $$\frac12(g^{\prime})^2 -\, g^{\prime\prime} g = -2g^{\frac32}\left(g^{\frac12}\right)_{xx}\geq 0,$$
  that is $g^{\frac12}$ concave, that is true only for $p<4$.
  
For general $m\neq 2$, we can rewrite
  \begin{align*}
   \frac{(m-1)}{m}(g^{\prime})^2 -g^{\prime\prime} g &= -g^{2-\frac{1}{m}} (g^{\frac{1}{m}-1} g^{\prime})_x\\
  &= -m g^{2-\frac{1}{m}} (g^{\frac{1}{m}})_{xx}\,.
   \end{align*}
   Similarly,
   we can prove that
   \begin{align*}
   (m-1)(g^{\prime})^2 -g^{\prime\prime} g &= -g^{m} (g^{1-m} g^{\prime})_x\\
  &= \frac{1}{m-2} g^{m} (g^{2-m})_{xx}\,.
   \end{align*}
   Therefore,  the formulas in \eqref{component11} and \eqref{determinant} become
    \begin{align}\label{component11-BIS}
  (\mathcal{H}_f)_{11}
&= \frac{g} { q^{m-1}} \frac{(g^{2-m})_{xx}}{m-2} +
(g^2W^{\prime\prime} + g g^{\prime}W^{\prime} - \lambda ) ,
  \end{align}
  and 
  \begin{align}\label {determinant-BIS}
    \det \mathcal{H}_f 
    &= -m^2 \frac{g^{2-\frac{1}{m}}}{g^{2(m-1)}q^{2m}} (g^{\frac{1}{m}})_{xx} + \frac{m} {g^{m-1} q^{m+1}} (g^2 W^{\prime\prime}+gg^{\prime}W^{\prime} -\lambda).
  \end{align}
Let us first compute
\begin{align*}
(g^\alpha(x))_x = -\alpha p\, x\, (1-x^2)^{\alpha p/2-1},
\end{align*}
\begin{align*}
(g^\alpha(x))_{xx}= -\alpha p\, (1-x^2)^{\alpha \frac{p}{2}-2} [ 1-(\alpha p-1)x^2]\,.
\end{align*}
The term $[ 1-(\alpha p-1)x^2]$ is always positive if $\alpha=2-m<0$; hence, $g^{2-m}$ is always convex. While, $g^{2-m}$ is concave if and only if $\alpha= 2-m >0$ and $p < 2/(2-m)$. On the other side, if $\alpha=1/m$ then, $[ 1-(\alpha p-1)x^2]$ is positive if and only if $p\le 2m$ and therefore  $g^{1/m}$ is concave. 
Note that, if $p\le 2m$ and $m < 2$ then $p$ satisfies also the condition $p< 2/(2-m)$. Hence,
summarizing,
\begin{itemize}
 \item if $m>2$ then $g^{2-m}$ is convex\,;
 \item if $m< 2$ and $p< 2/(2-m)$ then $g^{2-m}$ is concave. Note that, if $m< 2$ and $p> 2/(2-m)$ then $\alpha p >2$ therefore $[ 1-(\alpha p-1)x^2]<0$ if  $(1/(\alpha p-1))<|x|<1$ and $[ 1-(\alpha p-1)x^2]> 0$ if $|x|<1/(\alpha p-1)$;
 \item if $p\le 2m$ then $g^{1/m}$ is concave;
 \item if $m< 2$ and $p\le 2m$ then $g^{2-m}$ is concave and $g^{1/m}$ concave. Indeed, if $m< 2$ then $2m= \min\{2m, 2/(2-m)\}$; hence, 
 $p\le 2m$ implies $p< 2/(2-m)$;
 \item if $m< 2$ and $2m< p< 2/(2-m)$: $g^{1/m}$ is convex and $g^{2-m}$ is concave\,.

    \end{itemize}

We can conclude that $(\mathcal{H}_f)_{11}$ and  $ \det \mathcal{H}_f $ are both positive as soon $g^2W^{\prime\prime} + g g^{\prime} W^{\prime} - \lambda \geq 0$, $m>2$ and $p\le 2m$.

\section{Possible extensions: unbounded mobilities}

\subsection{Cauchy problems on $\R$ with unbounded mobilities}
We can consider  the following equation
\begin{equation}\label{eq:main_u_2}
  \partial_t u = (\beta(x)^2 u (\varphi'(b(x)u)+V)_x)_x,
\end{equation}
posed on $(x,t)\in Q_\R$ with $\beta:\R\rightarrow [0,+\infty)$ the inverse metric factor on the real line $\R$. We assume that $\beta$ satisfies
\begin{itemize}
  \item [(B1)] $\beta \in C^1(\R)$
  \item [(B2)] $\beta(x)>0$ for all $x\in \R$ and $\inf_{x\in \R}\beta(x)=\underline{\beta}>0$.
\end{itemize}

In \cite{Lisini}, the solution to the Cauchy problem in \eqref{eq:main_u_2} was tackled by considering a variant of the theory in \cite{AGS} in which the usual Wasserstein distance is replaced by a distance constructed in the same spirit of \cite{benamou_brenier}\ie
\[d_\beta(u_1,u_2)=\inf\left\{\int_0^1\int_\R \frac{1}{\beta(x)^2} u(x,s)w(x,s)^2 dx ds\,,\,\, u(x,0)=u_1\,,\,\,u(x,1)=u_2\,,\,\, u_s+(uw)_x = 0\right\}.\]
The results in \cite{Lisini} are valid when $b$ is smooth, uniformly bounded and uniformly positive on $\R$, and they holds in arbitrary space dimension. The goal would be to use a  scaling approach similar to the one introduced in Section \ref{sec:scaling} in order to reduce \eqref{eq:main_u_2} to an equation with homogeneous mobility.

\vskip 1cm

\noindent\textsc{Acknowledgments.} We are deeply grateful to Proff. Marco Di Francesco, Daniel Matthes and Johannes Zimmer  for introducing us to the problem and for  very illuminating discussions. SF acknowledges support from the EU-funded Erasmus Mundus programme `MathMods - Mathematical models in engineering: theory, methods, and applications' at the University of L'Aquila, and from the local fund of the University
of L’Aquila `DP-LAND (Deterministic Particles for Local And Nonlocal Dynamics).

\bibliography{degenerate_mobility}

\bibliographystyle{plain}

\end{document}